\documentclass[oneside, 11pt]{amsart}
\usepackage[utf8]{inputenc}

\usepackage{amsfonts}
\usepackage{amsmath}
\usepackage{amssymb}
\usepackage{amsthm}
\usepackage[a4paper]{geometry}
\usepackage{mathrsfs} %for nicer caligraphic letters via mathscr
\usepackage[dvipsnames]{xcolor}
\usepackage[pagebackref=false,colorlinks=true, linkcolor=blue, citecolor=blue]{hyperref}

\usepackage{appendix}
\usepackage{enumerate}
\usepackage{geometry}
\usepackage{tikz-cd} 
\usepackage{orcidlink}

\theoremstyle{plain}
\newtheorem{theorem}{Theorem}
\newtheorem{proposition}[theorem]{Proposition}
\newtheorem{lemma}[theorem]{Lemma}
\newtheorem{corollary}[theorem]{Corollary}

\theoremstyle{definition}

\numberwithin{theorem}{section}
\numberwithin{equation}{section} %Equations are numbered as #section.#equation

\newcommand{\set}[1]{\left\{ #1 \right\}}
\newcommand{\norm}[1]{\left \lVert  #1 \right \rVert}
\newcommand{\abs}[1]{\left\lvert #1 \right\rvert}

\newcommand{\vertiii}[1]{{\left\vert\kern-0.25ex\left\vert\kern-0.25ex\left\vert #1 
    \right\vert\kern-0.25ex\right\vert\kern-0.25ex\right\vert}}

\newcommand{\Z}{\mathbb{Z}}

\newcommand{\R}{\mathbb{R}}
\newcommand{\N}{\mathbb{N}}

\newcommand{\calF}{\mathcal{F}}

\newcommand{\calM}{\mathcal{M}}

\title[Time-periodic behaviour without reversibility]{On the absence of time-translation symmetry breaking in some non-reversible interacting particle systems}
\author{Jonas K\"oppl \ \orcidlink{0000-0001-9188-1883}}
\address{Weierstrass Institute, Berlin, Germany. }
\email{koeppl@wias-berlin.de}
\date{\today}
\keywords{Interacting particle systems, free energy, relative entropy, attractor, time-translation symmetry breaking}
\subjclass{Primary 82C22; Secondary 60K35} 
\begin{document}

\begin{abstract} 
    The conditions under which stochastic systems of infinitely many interacting particles can maintain sufficient spatial order to move coherently along a time-periodic orbit, thereby breaking the time-translation invariance of the underlying dynamical equation, have been an elusive issue. 
    Via a free energy technique, we prove that if a non-reversible interacting particle system on $\Z^d$, $d=1,2$, with strictly positive rates admits a product measure as a stationary measure, then it cannot exhibit time-periodic behaviour. This provides a first step towards a general conjecture that time-periodic behaviour cannot occur in one- and two-dimensional systems with short-range interactions and constitutes the first result for non-reversible dynamics in dimension two. 
\end{abstract}
\maketitle
\section{Introduction}
We consider interacting particle systems on the $d$-dimensional integer lattice $\mathbb{Z}^d$, which are Markov processes on the state space $\Omega = \{1,\dots, q\}^{\Z^d}$ specified in terms of generators of the form 
\begin{equation*}
    \mathscr{L}f(\eta) = \sum_{x \in \Z^d}\sum_{\xi_x} c_x(\eta,\xi_x)\left[f(\xi_x \eta_{x^c})-f(\eta)\right], \quad \eta \in \Omega, 
\end{equation*}
for local functions $f:\Omega \to \R$. The transition rates $c_x(\eta,\xi_x)$ can be interpreted as the infinitesimal rate at which the particle at site $x$ switches from the state $\eta_x$ to $\xi_x$, given that the rest of the system is currently in state $\eta_{x^c}$.  We will denote the associated semigroup by $(P_t)_{t \geq 0}$ and the set of probability measures on $\Omega$ by $\calM_1(\Omega)$. 
The question we are primarily interested in is which symmetries of the transition rates are inherited by the stationary behaviour of the dynamics. \medskip

As an example, consider the Glauber dynamics for the Ising model. Then it is well known that the transition rates are both invariant under a global spin flip and lattice translations. However, in $d\geq 2$ at sufficiently low temperatures there exist time-stationary measures which are not invariant under global spin flips and in $d\geq3$ there exist non-translation-invariant time-stationary measures. So these two symmetries can be spontaneously broken in infinite volume systems. 
Another obvious and hence often overlooked symmetry is that the transition rates do not depend on time, i.e., the generator is autonomous, and thus invariant under time shifts. Of course, trivially any time-stationary measure is also invariant under time shifts and one needs to be a bit more careful when defining the notion of time-translation symmetry breaking. \medskip 

\noindent We will say that \textit{(spontaneous) time-translation symmetry breaking} occurs if 
\begin{enumerate}[\bfseries (TTSB)]
    \item
        $\exists \mu_0 \in \calM_1(\Omega) \ \exists \tau>0$ such that $\mu P_\tau = \mu_0$ and for all $s \in (0,\tau)$ we have $\mu P_s \neq \mu_0$. 
\end{enumerate}
In other words, there exists a non-trivial time-periodic orbit $(\mu P_s)_{s\in[0,\tau]}$ in the space $\mathcal{M}_1(\Omega)$ of probability measures on $\Omega$. This means that the usual continuous shift symmetry by any $t >0$ is reduced to a discrete symmetry by $t \in \tau\mathbb{N}$. 
\medskip 

First note, that if an interacting particle system with strictly positive transition rates can break the time-translation symmetry  then, as a consequence of the ergodic theorem for continuous-time Markov chains on finite state spaces, this can only happen in infinite volumes, so it is necessarily a many-body phenomenon that arises due to interactions. The conditions under which such noisy systems can maintain sufficient spatial order to move coherently along a time-periodic orbit, thereby spontaneously breaking the time-translation invariance of the underlying dynamical equation, have been a contentious issue in the physics literature \cite{grinstein_temporally_1993, bennett_stability_1990, chate_collective_1992}, and a mathematical treatment of such broken-symmetry, macroscopically time-dependent states has been mostly limited to mean-field like approximations \cite{collet_rhythmic_2016}.

As it turns out, the possibility or non-possibility of time-translation symmetry breaking appears to heavily depend on the dimension $d$. Non-rigorous arguments \cite{grinstein_temporally_1993} and extensive numerical studies of specific models \cite{avni_nonreciprocal_2025,avni_dynamical_2025, guislain_collective_2024} seem to suggest that interacting particle systems with short-range interactions can exhibit $\mathbf{(TTSB)}$ in dimensions $d\geq3$, but cannot produce stable time-periodic behaviour  in dimensions $d=1,2$. 

Mathematically, the literature regarding this issue is much scarcer. 
While a classical result by Mountford and Ramirez--Varadhan, see \cite{mountford_coupling_1995} and \cite{ramirez_relative_1996}, shows that no such system with finite-range transition rates can exist in one spatial dimension, recent constructions show that systems with long-range interactions can indeed exhibit $\mathbf{(TTSB)}$, even in dimension one and two, see \cite{jahnel_time-periodic_2025} for $d=1,2$ and  \cite{jahnel_class_2014} for $d \geq 3$. 

Recently, in \cite{jahnel_long-time_2025}, it was additionally shown that in one and two dimensions, any \textit{reversible} short-range system cannot exhibit $\mathbf{(TTSB)}$. 

Since the examples in \cite{jahnel_time-periodic_2025} heavily rely on the long-range nature of the interactions, the current theoretical evidence leads to the same conjecture as in the physics literature: $\mathbf{(TTSB)}$ cannot occur in one- and two-dimensional systems with short-range interactions, even in non-reversible systems. 

In this article, we want to take a first step towards verifying this conjecture and investigate the possible long-time behaviour of interacting particle systems in one and two spatial dimensions \textit{without} the additional assumption of reversibility, at least for a specific class of systems. For this, we combine the proof strategy of \cite{jahnel_long-time_2025} together with key ideas from \cite{jahnel_dynamical_2023} and \cite{ramirez_uniqueness_2002} that allow us to go beyond reversible systems. 
\subsection*{Organisation of the manuscript} In Section \ref{section:setting-and-main-results}, we introduce the precise framework in which we are working and setup the required notation before we state our main results. We then comment on possible extensions to more general non-reversible systems in dimensions $d=1,2$ in Section \ref{section:outlook}. After this we finally start with the main work and provide an outline of the proof strategy in Section \ref{section:proof-strategy} before diving into the details in Section \ref{section:proof-relative-entropy-loss-principle}.

\section{Setting and main results}\label{section:setting-and-main-results}
Let $q \in \N$ and consider the configuration space $\Omega := \Omega_0^{\Z^d} = \{1,\dots,q\}^{\Z^d}$, which we will equip with the usual product topology and the corresponding Borel sigma-algebra $\calF$. For $\Lambda \subset \Z^d$ let $\calF_\Lambda$ be the sub-sigma-algebra of $\calF$ that is generated by the open sets in $\Omega_\Lambda := \{1,\dots, q\}^{\Lambda}$. We will use the shorthand notation $\Lambda \Subset \Z^d$ to signify that $\Lambda$ is a finite subset of $\Z^d$. In the following we will often denote for a given configuration $\omega \in \Omega$ by $\omega_\Lambda$ its projection to the volume $\Lambda \subset \Z^d$ and write $\omega_\Lambda \omega_\Delta$ for the configuration on in $\Lambda \cup \Delta$ composed of $\omega_\Lambda$ and $\omega_\Delta$ for disjoint $\Lambda, \Delta \subset \Z^d$. For the special case $\Lambda = \{x\}$ we will also write $x^c = \Z^d \setminus \{x\}$ and $\omega_x\omega_{x^c}$. 
The set of probability measures on $\Omega$ will be denoted by $\calM_1(\Omega)$ and the space of continuous functions by $C(\Omega)$. For a configuration $\eta \in \Omega$ we will denote by $\eta^{x,i}$ the configuration that is equal to $\eta$ everywhere except at the site $x$ where it is equal to $i$. Moreover, for $\Lambda \subset \Z^d$ we will denote the corresponding cylinder sets by 
$
    [\eta_\Lambda] = \{\omega : \ \omega_\Lambda \equiv \eta_\Lambda \}. 
$
Whenever we are taking the probability of such a cylinder event with respect to some measure $\nu \in \calM_1(\Omega)$, we will omit the square brackets and simply write $\nu(\eta_\Lambda)$. 

\subsection{Interacting particle systems}

We will consider time-continuous Markovian dynamics on $\Omega$, namely interacting particle systems characterized by time-homogeneous generators $\mathscr{L}$ with domain $\text{dom}(\mathscr{L})$ and its associated Markov semigroup $(P_t)_{t \geq 0}$. 
For interacting particle systems we adopt the notation and exposition of the classical textbook \cite[Chapter 1]{liggett_interacting_2005}. 
In our setting, the generator $\mathscr{L}$ is given via a collection of  single-site transition rates $c_x(\eta, \xi_x)$, which are continuous in the starting configuration $\eta \in \Omega$. 
These rates can be interpreted as the infinitesimal rate at which the particle at site $x$ switches from the state $\eta_x$ to $\xi_x$, given that the rest of the system is currently in state $\eta_{x^c}$. 
The full dynamics of the interacting particle system is then given as the superposition of these local dynamics, i.e., 
\begin{align*}
    \mathscr{L}f(\eta) = \sum_{x \in \Z^d}\sum_{\xi_x}c_x(\eta, \xi_x)[f(\xi_x \eta_{x^c}) - f(\eta)].
\end{align*}
In \cite[Chapter 1]{liggett_interacting_2005} it is shown that the following two conditions are sufficient to guarantee the well-definedness. 
\begin{enumerate}[\bfseries (L1)]
    \item The rate at which the particle at a particular site changes its spin is uniformly bounded, i.e.,
    \begin{align*}
        \sup_{x \in \Z^d}\sum_{\xi_x}\norm{c_{x}(\cdot, \xi_{x})}_{\infty} < \infty 
    \end{align*}
    \item and the total influence of all other particles on a single particle is uniformly bounded, i.e.,
    \begin{align*}
        \sup_{x \in \Z^d}\sum_{y \neq x}\sum_{\xi_{x}}\delta_y\left(c_{x}(\cdot, \xi_{x})\right) < \infty, 
    \end{align*}
    where 
    \begin{align*}
        \delta_y(f) := \sup_{\eta, \xi\colon \eta_{y^c} = \xi_{y^c}}\abs{f(\eta)-f(\xi)}
    \end{align*}
    is the oscillation of a function $f:\Omega \to \R$ at the site $y \in \Z^d$. 
\end{enumerate}
Under these conditions, one can then show that the operator $\mathscr{L}$, defined as above, is the generator of a well-defined Markov process and that a core of $\mathscr{L}$ is given by the space of functions with finite total oscillation, i.e.
\begin{align*}
    D(\Omega) := \Big\{ f \in C(\Omega)\colon \sum_{x \in \Z^d} \delta_x(f) < \infty\Big\}.
\end{align*}

\noindent Let us emphasise briefly that we will not assume translation-invariance of the rates.

\subsection{Relative entropy loss}
For $\mu, \nu \in \calM_1(\Omega)$ and a finite volume $\Lambda \Subset \Z^d$ define the relative entropy of $\nu$ with respect to $\mu$ in $\Lambda$ via 
\begin{align*}
    h_\Lambda(\nu | \mu) :=
    \begin{cases}
    \sum_{\omega_\Lambda \in \Omega_\Lambda}\nu(\omega_{\Lambda})\log\frac{\nu(\omega_{\Lambda})}{\mu(\omega_\Lambda)}, \quad &\text{if } \nu_\Lambda \ll \mu_\Lambda, 
    \\\
    \infty, &\text{else,}
    \end{cases}
\end{align*}
where we use the convention that $0 \log 0 = 0$. Now recall that $(P_t)_{t \geq 0}$ denotes the Markov semigroup corresponding to the Markov generator $\mathscr{L}$. We write $\nu_t:=\nu P_t$ for the time-evolved measure $\nu\in\calM_1(\Omega)$.
The finite-volume relative entropy loss in $\Lambda \Subset \Z^d$ is defined by 
\begin{align*}
    g_\Lambda^{\mathscr{L}}(\nu|\mu) 
    := \frac{d}{dt}\Big \lvert_{t=0}h_{\Lambda}( \nu_t\lvert \mu). 
\end{align*}
%In the special case where $\Lambda$ is the centered cube $[-n,n]^d$ we will use the short-hand notation $g^n_{\mathscr{L}}(\cdot \lvert \mu)$. 
Usually, one then works with the density limits of the relative entropy and the relative entropy loss and shows that the latter can still be used as a Lyapunov function for the dynamics. However, the sub-additivity arguments that are used to show that the relative entropy loss density actually exists as a limit are only available for translation-invariant measures. We do not want to make any such assumptions and therefore we will have to instead work with the family of finite-volume relative entropy losses. Note that calling the finite-volume derivatives \textit{loss} is not entirely correct, since they are not necessarily non-positive. However one can show that the positive contributions are of boundary order and vanish in the density limit, see \cite[Lemma~21 and Lemma~23]{jahnel_dynamical_2023}. 

\subsection{Time-stationary measures, orbits, and the attractor}
If one is interested in the long-term behaviour of an interacting particle system, a natural object to study is the so-called \textit{attractor} of the measure-valued dynamics which is defined as
\begin{align*}
    \mathscr{A} = \set{\nu \in \calM_1(\Omega)\colon \exists \nu_0 \in \calM_1(\Omega) \text{ and } t_n \uparrow \infty \text{ such that } \lim_{n \to \infty}\nu_{t_n} = \nu}. 
\end{align*}
This is the set of all accumulation points of the measure-valued dynamics induced by $\mathscr{L}$. In the language of dynamical systems this is the $\omega$-limit set. This encodes (most of) the dynamically relevant information about the long-time behaviour of the system. 
In this article, we are particularly interested in two subsets of the attractor, namely the \textit{time-stationary measures} given by 
\begin{align*}
    \mathscr{S} := \set{\nu \in \calM_1(\Omega): \forall s \geq 0: \nu P_s = \nu},
\end{align*}
and the measures which lie on a \textit{stationary orbit}
\begin{align*}
    \mathscr{O} := \set{\nu \in \calM_1(\Omega): \exists T > 0: \nu P_T = \nu}. 
\end{align*}
The relation between these sets can be summarised as follows
\begin{align*}
    \mathscr{S} \subset \mathscr{O} \subset \mathscr{A}. 
\end{align*}
 In general, the first inclusion is strict as can be seen by considering the non-trivial examples constructed in \cite{jahnel_class_2014} and \cite{jahnel_time-periodic_2025} or the (from a probabilistic point of view) trivial example given in \cite[p.12]{liggett_interacting_2005}. 
Historically, most attention has been paid to investigating the set of time-stationary measures and their properties, but not much was known about the behaviour of interacting particle systems outside of this set.

\subsection{Main results}
Before we can state our main results, let us introduce some stronger conditions on the transition rates $(c_x(\cdot, \xi_x))_{x \in \Z^d, \xi_x \in \Omega_0}$ that will turn out to be crucial for our results. 

\noindent \textbf{Conditions for the rates.} 
\begin{enumerate}[\bfseries (R1)]
    \item  The rate at which the particle at a particular site changes its spin is uniformly bounded, i.e.,
    \begin{align*}
        \sup_{x \in \Z^d}\sum_{\xi_{x}}\norm{c_{x}(\cdot, \xi_{x})}_{\infty} < \infty. 
    \end{align*}
    \item For every $x \in \Z^d$ and $\xi_x \in \Omega_0$ the function 
    \begin{align*}
        \Omega \ni \eta \mapsto c_x(\eta, \xi_x) \in [0,\infty)
    \end{align*}
    is continuous.
    \item The transition rates are bounded away from zero, i.e.,
    \begin{align*}
        \inf_{x\in \Z^d,\, \eta \in \Omega,\, \xi_x \in \Omega_0 } c_x(\eta, \xi_x) =: \mathbf{c} >0. 
    \end{align*}
 %   \jk{Would be nice to weaken this assumption but it seems to be crucial in the proof of Lemma \ref{lemma:upper-bound-alpha}}
\item We have 
\begin{align*}
    \sum_{y \in \Z^d} \abs{y}\sup_{x \in \Z^d} \delta_{x+y}c_x(\cdot) < \infty,
\end{align*}
where $c_x(\eta) = \sum_{i \neq \eta_x}c_x(\eta,i)$ is the total rate at which the particle at site $x$ changes its state when the system is in configuration $\eta$. 
\end{enumerate}
The condition $\mathbf{(R4)}$ essentially ensures that the specification and the transition rates are \textit{short-range} and is satisfied if the dependence of $c_x$ on the spin at site $y \in \Z^d$ decays faster than $\abs{x-y}^{-2d}$.
Let us emphasise again that we do \textit{not} assume that the rates translation-invariant.  \medskip

Our main result is the following no-go result that essentially states that the presence of a product measure as a fixed point for the measure-valued dynamics makes it impossible to also possess periodic orbits. Note that admitting a product measure as a time-stationary measure does \textit{not} imply that the dynamics are in some sense trivial or non-interacting. The example of kinetically constrained models, see e.g.~\cite{hatarsky-toninelli}, shows that even interacting particle systems with very strong interactions can admit a product measure as a time-stationary measure.  

\begin{theorem}\label{theorem:main-result}
Let $d \in \{1,2\}$ and assume that $\mathscr{L}$ is the generator of an interacting particle system that satisfies assumptions $\mathbf{(R1)-(R4)}$ and that it admits a product measure $\mu$ as time-stationary measure.Then we have that 
\begin{align*}
    \mathscr{O} = \mathscr{S} = \{\mu\}. 
\end{align*}
\end{theorem}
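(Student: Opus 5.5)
We would run a free-energy (relative entropy) argument relative to the stationary product measure $\mu$, following the strategy of \cite{jahnel_long-time_2025} together with the non-reversible techniques of \cite{jahnel_dynamical_2023, ramirez_uniqueness_2002}.

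\emph{Step 1: setup.} Take $\nu \in \mathscr{O}$ with $\nu P_T = \nu$ and write $\nu_t := \nu P_t$. Since $\mathscr{S}\subset\mathscr{O}$, it suffices to show $\nu = \mu$. For boxes $\Lambda_n = [-n,n]^d$ we consider
\[
h_{\Lambda_n}(\nu_T|\mu) - h_{\Lambda_n}(\nu_0|\mu) = \int_0^T g^{\mathscr{L}}_{\Lambda_n}(\nu_t|\mu)\,dt = 0 .
\]
By \textbf{(R3)} and the product structure of $\mu$, the entropies are finite, with $h_{\Lambda_n}\le C|\Lambda_n|$.

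\emph{Step 2: decomposition of the entropy loss.} We split the finite-volume loss into a bulk term and a boundary term. Differentiating and using that $\mu$ is a product measure (so its finite-volume marginals are explicit and $\mu\mathscr{L}=0$), we write
\[
g^{\mathscr{L}}_{\Lambda_n}(\nu|\mu) = -\,\mathcal{D}_{\Lambda_n}(\nu) + \mathcal{R}_{\Lambda_n}(\nu),
\]
with the following properties.
\begin{itemize}
\item $\mathcal{D}_{\Lambda_n}\ge 0$ is a Dirichlet-form-like quantity. It is built from $\sum_{x\in\Lambda_n}\sum_{\xi_x}\int c_x\,\Psi(f_n(\xi_x\eta)/f_n(\eta))\,d\nu$, with $f_n = d\nu_{\Lambda_n}/d\mu_{\Lambda_n}$ and $\Psi(u)=u-1-\log u\ge 0$. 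This convex-inequality trick replaces reversibility; it uses the stationarity identity $\sum_{x,\xi}\mu(c_x\cdot)$ balance coming from $\mu\mathscr{L}=0$.
\item $\mathcal{R}_{\Lambda_n}$ collects the contributions of rates at $x\in\Lambda_n$ that depend on spins outside $\Lambda_n$. Using \textbf{(R1)}, \textbf{(R3)} and \textbf{(R4)}, a Pinsker/Cauchy--Schwarz estimate gives
\[
|\mathcal{R}_{\Lambda_n}| \le C\sum_{x\in\Lambda_n}\sum_{y\notin\Lambda_n}\delta_y c_x \;\cdot\; \text{(local dissipation near }\partial\Lambda_n)^{1/2}.
\]
\end{itemize}

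\emph{Step 3: the dimension-dependent argument (main obstacle).} Integrating over a period gives
\[
\int_0^T \mathcal{D}_{\Lambda_n}(\nu_t)\,dt = \int_0^T \mathcal{R}_{\Lambda_n}(\nu_t)\,dt .
\]
The boundary term is controlled by $\sqrt{|\partial\Lambda_n|}$ times the square root of the dissipation in the annulus $\Lambda_{n+k}\setminus\Lambda_{n-k}$. Here \textbf{(R4)}, with its first moment $\sum|y|\delta$, ensures that the summed long-range leakage across the boundary is $O(|\partial\Lambda_n|)$.

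Setting $D(n)=\int_0^T\mathcal{D}_{\Lambda_n}(\nu_t)dt$, which is increasing in $n$ since $\Psi\ge 0$, we obtain a recursive inequality
\[
D(n)\le C\sqrt{n^{d-1}\,(D(n+k)-D(n-k))} + o(1).
\]
For $d\le2$, $\sum_n n^{-(d-1)} = \infty$. Iterating the inequality shows that if $D(n_0)>0$ for some $n_0$, then $D(n)$ grows superpolynomially (in $d=2$, like $e^{c\log^2 n}$ or at least faster than $n^d$). This contradicts the a priori bound $D(n)\le C T|\Lambda_n|$ from \textbf{(R1)}. Hence $D(n)=0$ for all $n$.

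Making this step rigorous is the crux. It requires controlling non-local rate dependence without translation invariance, and it is where the exact constants in \textbf{(R4)} and the lower bound \textbf{(R3)} enter, since \textbf{(R3)} keeps $\log$-ratios of rates bounded.

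\emph{Step 4: conclusion.} Vanishing dissipation means that for a.e.\ $t$ and all $x$, $\xi_x$, the conditional density ratios satisfy $f(\xi_x\eta)=f(\eta)$ $\nu_t$-a.s., because $c_x\ge\mathbf{c}>0$. So every single-site conditional probability of $\nu_t$ agrees with that of $\mu$. Since $\mu$ is a product measure, this forces $\nu_t=\mu$. In particular $\nu=\mu$, which proves $\mathscr{O}=\mathscr{S}=\{\mu\}$.
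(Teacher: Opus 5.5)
Your architecture matches the paper's. Your $\mathcal{D}_\Lambda$ is exactly its bulk term, since $F(1/r)\,r=\Psi(r)$, and the boundary term comes from subtracting $\int\mathscr{L}(d\nu/d\mu|_{\calF_\Lambda})\,d\mu=0$. However, Step~3 relies on a claim whose justification is wrong, and this is exactly where the product structure has to enter. You assert that $D(n)$ is increasing ``since $\Psi\ge 0$'', but nonnegativity only gives $D(n)\ge 0$. The functionals $\mathcal{D}_{\Lambda_n}$ and $\mathcal{D}_{\Lambda_{n+1}}$ are built from different densities $f_n,f_{n+1}$ and different weights $\int_{[\eta_\Lambda]}c_x(\omega,j)\,\nu(d\omega)$. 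A convexity (Jensen) argument does not go through, because these weights are not proportional to $\nu(\xi_\Lambda)$ across the extensions $\xi_\Lambda$ of $\eta_\Delta$. Yet you need this monotonicity: $D(n+k)-D(n-k)$ must dominate the dissipation near $\partial\Lambda_n$ \emph{measured with the density $f_n$}, since that is what the boundary term of $\Lambda_n$ involves. The paper proceeds in two steps. It first uses $\Psi(r)\ge\frac12(\sqrt r-1)^2$ and $\mathbf{(R3)}$ to bound the dissipation below by $\frac{\mathbf c}{2}\sum_x\alpha_\Lambda(x,\nu)$, whose weights do not depend on $\nu$. It then proves $\alpha_\Delta(x,\rho)\le\alpha_\Lambda(x,\rho)$ (Lemma~\ref{lemma:monotonicity-alpha}) from the factorisation $\mu(\eta_\Lambda)=\mu(\eta_x)\mu(\eta_{\Lambda\setminus x})$ and subadditivity of $(u,v)\mapsto(\sqrt{u/c}-\sqrt{v/d})^2$. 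The product structure also gives $\beta_\Lambda^2\le C\alpha_\Lambda$, which is the correct replacement for your ``Pinsker'' bound on $\mathcal{R}$.

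Second, the annulus localisation with an $o(1)$ error is not justified under $\mathbf{(R4)}$. Sites $x\in\Lambda_{n-k}$ still have $\gamma_n(x)>0$, and bounding their contribution crudely only gives an error of order $n^{d-1}\sum_{\abs{v}>k}\abs{v}\sup_x\delta_{x+v}c_x$. For fixed $k$ and $d=2$ this is not $o(1)$. Letting $k\asymp n$ instead makes your recursion admit non-zero solutions such as $D(n)=\log(n+1)$. The paper avoids localisation altogether. With $\delta_k=\sum_{x\in\Lambda_k}\gamma_k(x)\int_0^T\alpha_k(x,\nu_s)\,ds$, monotonicity and $\mathbf{C}_1=\sup_x\sum_n\gamma_n(x)<\infty$ give $\sum_{k\le n}\delta_k\le\mathbf{C}_1\sum_{x\in\Lambda_n}\int_0^T\alpha_n(x,\nu_s)\,ds$; this is where the first moment in $\mathbf{(R4)}$ is used. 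Combining this with the zero-loss inequality, $\beta^2\le C\alpha$, and two Cauchy--Schwarz steps yields $\bigl(\sum_{k\le n}\delta_k\bigr)^2\le\mathbf{C}\,n^{d-1}\delta_n$. This telescopes to a contradiction for $d\le2$ without any a priori growth bound.

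Finally, you never check that $\nu_t(\eta_\Lambda)>0$ for all cylinders and all $t\in[0,T]$. This is needed to define $f_n(\xi_x\eta)/f_n(\eta)$, to differentiate $h_{\Lambda_n}(\nu_t|\mu)$, and for your Step~4. The paper obtains it from the positive-mass property (Section~\ref{section:proof-strategy}) applied along the periodic orbit.
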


While the first identity can be seen as the main result of this paper, the second identity also extends the results of \cite{ramirez_uniqueness_2002} from binary to general finite local state spaces and to unbounded range interactions up until power-law decay with exponent $\alpha > 2d$, see assumption $\mathbf{(R4)}$. 
 \medskip 

This structure theorem in particular implies the following no-go result that essentially states that the presence of a product measure as fixed point for the measure-valued dynamics makes it impossible to also possess periodic orbits. The precise formulation is as follows. 

\begin{corollary}\label{corollary:no-periodic}
Let $d \in \{1,2\}$ and assume that $\mathscr{L}$ satisfies the above assumptions and that $(P_t)_{t \geq 0}$ is the Markov semigroup generated by $\mathscr{L}$ with some product measure $\mu$ as time-stationary measure. Then, the measure-valued dynamics given by 
\begin{align*}
    [0,\infty) \times \calM_1(\Omega) \ni (t, \nu) \mapsto \nu P_t \in \calM_1(\Omega)
\end{align*}
does not contain non-trivial time-periodic orbits, i.e., there is no probability measure $\nu \in \calM_1(\Omega)$ such that $(\nu P_t)_{t\geq 0}$ is non-constant and such that there exists a $T>0$ with $\nu_t = \nu_{t +T}$. 
\end{corollary}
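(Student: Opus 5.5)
The statement to prove is Corollary~\ref{corollary:no-periodic}. I will derive it directly from Theorem~\ref{theorem:main-result}, by contradiction and with no further analytic input. Suppose some $\nu \in \calM_1(\Omega)$ and $T>0$ satisfy $\nu_{t+T} = \nu_t$ for all $t \geq 0$.

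The first step is to note that every measure on the orbit lies in $\mathscr{O}$. For each $t \geq 0$ the semigroup property gives
\begin{align*}
\nu_t P_T = \nu P_{t+T} = \nu_{t+T} = \nu_t,
\end{align*}
so $\nu_t \in \mathscr{O}$ by the definition of $\mathscr{O}$. In particular $\nu = \nu_0 \in \mathscr{O}$.

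The second step applies Theorem~\ref{theorem:main-result}. The hypotheses match exactly: $d\in\{1,2\}$, the rates satisfy $\mathbf{(R1)}$--$\mathbf{(R4)}$, and the product measure $\mu$ is time-stationary. The theorem therefore gives $\mathscr{O} = \mathscr{S} = \{\mu\}$. Since every $\nu_t$ lies in $\mathscr{O}$, we get $\nu_t = \mu$ for all $t\geq 0$. So $t\mapsto \nu P_t$ is constant, which contradicts the assumption that the orbit is non-constant.

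One could instead use only $\nu \in \mathscr{O} = \mathscr{S}$, which makes $\nu$ stationary and hence the orbit constant. The two routes are equivalent.

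There is no real obstacle at the level of the corollary. All the difficulty sits in Theorem~\ref{theorem:main-result}, namely the inclusion $\mathscr{O}\subset\mathscr{S}$, which I would obtain with the finite-volume relative entropy loss $g^{\mathscr{L}}_\Lambda(\cdot|\mu)$ controlled in $d=1,2$ by boundary terms. The only point to check here is bookkeeping. The definition of a periodic orbit in the corollary, $\nu_t=\nu_{t+T}$ for all $t$, is at least as strong as $\nu P_T=\nu$, which is what membership in $\mathscr{O}$ requires. So the theorem applies verbatim, and it also rules out $\mathbf{(TTSB)}$ in this setting.
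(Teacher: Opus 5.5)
Your proof is correct and takes the paper's route: the paper presents the corollary as an immediate consequence of Theorem~\ref{theorem:main-result}. As you do, it observes that any $\nu$ with $\nu_{t+T}=\nu_t$ lies in $\mathscr{O}=\mathscr{S}=\{\mu\}$, so the orbit is constant.
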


The proof strategy for Theorem \ref{theorem:main-result} and comments on possible extensions to more general time-stationary measures $\mu$ is discussed in Section \ref{section:proof-strategy}. 

\section{Outlook}\label{section:outlook}
Before diving into proof of the main result, let us briefly comment on some open problems related to time-periodic behaviour in interacting particle systems and possible strategies to establish (parts of) the conjecture mentioned in the introduction. 

\subsection{Towards a no-go theorem in $d=1,2$}
At least on an intuitive level one could be led to believe that the stationary measures of interacting particle system with sufficiently nice rates, say finite-range and strictly positive, should also be somewhat regular. From this perspective, it is not much of a stretch to conjecture that generically there should be at least one stationary measure $\mu$ which is quasilocal, i.e., it admits a version of its local conditional distributions $\mu_\Lambda[\cdot \lvert \cdot]$ which is \textit{continuous with respect to the boundary condition}. In the present setting of finite local state spaces this in particular implies that $\mu$ is a Gibbs measure for some absolutely summable Hamiltonian $\mathcal{H}$. The set of translation-invariant Gibbs measures for a Hamiltonian $\mathcal{H}$ will be denoted by $\mathscr{G}_\theta(\mathcal{H})$. 

Assuming that this quasilocality is generically given, if one now restricts attention to the translation-invariant case, then the following proposition is a consequence of a more general result in \cite{jahnel_dynamical_2023}. 

\begin{proposition}
    If a translation-invariant interacting particle system with generator $\mathscr{L}$ that satisfies $\mathbf{(L1)-(L2)}$ and $\mathbf{(R3)}$ admits a time-stationary Gibbs measure $\mu \in \mathscr{G}_\theta(\mathcal{H})$, where $\mathcal{H}$ is absolutely summable, then any time-periodic orbit that consists of translation-invariant measures is necessarily contained in $\mathscr{G}_\theta(\mathcal{H})$. 
\end{proposition}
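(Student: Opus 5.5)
The plan is to reduce the statement to the relative entropy loss machinery of \cite{jahnel_dynamical_2023}, applied to the densities of translation-invariant measures, with a periodicity argument on top. Let $(\nu_t)_{t\ge0}$ be a time-periodic orbit with period $T>0$ and $\nu_0$ translation-invariant. Because the rates are translation-invariant, every $\nu_t$ is translation-invariant as well. Since $\mu \in \mathscr{G}_\theta(\mathcal{H})$ with $\mathcal{H}$ absolutely summable, the relative entropy density
\begin{align*}
h(\nu_t|\mu) = \lim_{n\to\infty} |\Lambda_n|^{-1} h_{\Lambda_n}(\nu_t|\mu)
\end{align*}
exists along centred cubes $\Lambda_n$. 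It is finite, and it equals $-s(\nu_t)+\nu_t(f_{\mathcal H}) + P(\mathcal H)$, where $s$ is the entropy density, $P(\mathcal H)$ the pressure and $f_{\mathcal H}$ the energy per site. The same holds for the loss density
\begin{align*}
g(\nu|\mu) = \lim_n |\Lambda_n|^{-1} g^{\mathscr{L}}_{\Lambda_n}(\nu|\mu).
\end{align*}
I would take from \cite{jahnel_dynamical_2023} the following facts, all valid under $\mathbf{(L1)}$--$\mathbf{(L2)}$:
\begin{enumerate}[(i)]
\item $g(\cdot|\mu)\le 0$ on translation-invariant measures, because the positive boundary contributions vanish in the density limit (cf.\ \cite[Lemma~21 and Lemma~23]{jahnel_dynamical_2023});
\item the integrated identity $h(\nu_t|\mu)-h(\nu_0|\mu)=\int_0^t g(\nu_s|\mu)\,ds$, or at least the inequality ``$\le$'';
\item $g(\cdot|\mu)$ is upper semicontinuous in the weak topology on translation-invariant measures.
\end{enumerate}

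Periodicity then gives
\begin{align*}
0 = h(\nu_T|\mu)-h(\nu_0|\mu) \le \int_0^T g(\nu_s|\mu)\,ds \le 0.
\end{align*}
Since $s\mapsto g(\nu_s|\mu)$ is upper semicontinuous and non-positive, it must vanish on a dense set of times. Upper semicontinuity, combined with weak continuity of $s\mapsto\nu_s$ and the fact that $\{g(\cdot|\mu)=0\}$ is closed (as $\{g\ge 0\}$ with $g\le0$), then gives $g(\nu_s|\mu)=0$ for every $s\in[0,T]$.

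The remaining step, which I expect to be the main obstacle, is the implication $g(\nu|\mu)=0 \Rightarrow \nu\in\mathscr{G}_\theta(\mathcal H)$. This is where the general result of \cite{jahnel_dynamical_2023} and the positivity assumption $\mathbf{(R3)}$ enter. The idea is to rewrite $g(\nu|\mu)$ as a limit of sums of terms of the form
\begin{align*}
\sum_{x,\xi_x}\int c_x(\eta,\xi_x)\,\Psi\!\left(\frac{\hat\nu(\xi_x\eta_{x^c})\,\mu(\eta)}{\hat\nu(\eta)\,\mu(\xi_x\eta_{x^c})}\right)\,\mu\text{-type reference}.
\end{align*}
Here $\Psi(u)=u\log u-u+1\ge0$ is the convex function coming from the $\log$-sum inequality. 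Stationarity of $\mu$, namely $\mu\mathscr{L}=0$, is what lets the non-reversible part be absorbed as a boundary term. Vanishing of the density then forces, via $c_x\ge\mathbf c>0$, the single-site conditional ratios of $\nu$ to agree with those of $\mu$ for $\nu$-a.e.\ boundary condition. Because $\mu$ is quasilocal, these single-site ratios are given by the specification of $\mathcal H$. Hence $\nu$ satisfies the single-site DLR equations, and therefore all DLR equations, so $\nu\in\mathscr{G}_\theta(\mathcal H)$.

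Rather than redo this, I would invoke the theorem of \cite{jahnel_dynamical_2023} stating that zero loss density implies membership in $\mathscr{G}_\theta(\mathcal{H})$ under these hypotheses. Applying it at each time $s\in[0,T]$ yields that the whole orbit $(\nu_s)_{s\in[0,T]}$ is contained in $\mathscr{G}_\theta(\mathcal{H})$.
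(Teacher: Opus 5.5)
Your proposal is correct and matches what the paper intends. The paper gives no separate argument; it defers to the general relative-entropy-density (free energy) Lyapunov result of \cite{jahnel_dynamical_2023}, which rests on exactly the ingredients you list: a non-positive loss density, the integrated inequality, and zero loss implying membership in $\mathscr{G}_\theta(\mathcal{H})$ under $\mathbf{(R3)}$. Equivalently, the orbit lies in the attractor since $\nu_s=\lim_{n}\nu_{s+nT}$, and you make this specialisation to periodic orbits explicit. As a minor simplification, you do not need upper semicontinuity of $g(\cdot|\mu)$ in the last step: $g(\nu_s|\mu)=0$ for almost every $s$ already places a dense set of the $\nu_s$ in the weakly closed set $\mathscr{G}_\theta(\mathcal{H})$, and continuity of $s\mapsto\nu_s$ does the rest.
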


By using an argument from \cite{kunsch_non_1984} based on properties of extremal measures we obtain the following connection between time-periodic behaviour and a property of the set of extremal Gibbs measures.  

\begin{proposition}
    If a translation-invariant interacting particle system satisfying the conditions $\mathbf{(L1)-(L2)}$ and $\mathbf{(R3)}$ admits a time-stationary measure $\mu \in \mathscr{G}_\theta(\mathcal{H})$, where $\mathcal{H}$ is absolutely summable, and if  $\emph{ex}\mathscr{G}_\theta(\mathcal{H})$ is finite, then every $\nu \in \mathscr{G}_\theta(\mathcal{H})$ is time-stationary for the dynamics. In particular, there can be no non-trivial time-periodic orbits. 
\end{proposition}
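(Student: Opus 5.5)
The proof combines two ingredients: the preceding proposition, which places every translation-invariant periodic orbit inside $\mathscr{G}_\theta(\mathcal{H})$, and the fact that a time-periodic curve in a finite-dimensional simplex, generated by an affine semigroup that preserves the simplex, must permute its finitely many extreme points.

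\textbf{Step 1: the invariant face.} By the preceding proposition (taking $\mathscr{S}$ in place of orbits), the relevant dynamics live in $\mathscr{G}_\theta(\mathcal{H})$. I also need the claim that $P_t$ maps $\mathscr{G}_\theta(\mathcal{H})$ into itself. The relative entropy density with respect to the stationary Gibbs measure $\mu$ is non-increasing along the flow, since the relative entropy loss density is $\le 0$ by the argument of \cite{jahnel_dynamical_2023}. Moreover, this density vanishes exactly on $\mathscr{G}_\theta(\mathcal{H})$ by the variational principle. Hence if $\nu\in\mathscr{G}_\theta(\mathcal{H})$, then $h(\nu_t|\mu)\le h(\nu|\mu)=0$, so $\nu_t\in\mathscr{G}_\theta(\mathcal{H})$. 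Translation invariance of the rates keeps $\nu_t$ translation-invariant. Therefore $P_t$ restricts to an affine, weakly continuous map of the simplex $K:=\mathscr{G}_\theta(\mathcal{H})$. The set $K$ is a Choquet simplex, and here it is finite-dimensional because $\mathrm{ex}\,K=\{\rho_1,\dots,\rho_m\}$ is finite.

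\textbf{Step 2: extreme points are permuted.} Write $\rho_i P_t=\sum_j a_{ij}(t)\rho_j$, where the weights are unique because $K$ is a simplex. The matrices $A(t)=(a_{ij}(t))$ are stochastic, satisfy $A(t+s)=A(t)A(s)$, and are continuous in $t$. I then use the key idea from \cite{kunsch_non_1984}: extremal translation-invariant Gibbs measures are mutually singular and tail/ergodically distinguishable. Since the dynamics have strictly positive bounded rates \textbf{(R3)}, $\rho_i P_t$ for small $t$ stays close to $\rho_i$ in a way that forces $a_{ii}(t)\to1$. With the semigroup property and continuity, this gives $A(t)=e^{tQ}$ for a $Q$-matrix $Q$.

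\textbf{Step 3: show $Q=0$, the main obstacle.} I would argue via entropy production, which should be strictly negative if mass moves between distinct extremal phases. Concretely, since $P_t$ is affine and $\mu$ is stationary, $\mu=\sum_j \lambda_j\rho_j$ with $\lambda A(t)=\lambda$. Evaluate the finite-volume losses $g_\Lambda^{\mathscr{L}}(\rho_i|\mu)$. The densities vanish because $\rho_i\in K$, but I expect a surface-order computation to show that any transition $\rho_i\to\rho_j$ at positive rate would require a change of the macroscopic ergodic averages in time $t$. Such a change is impossible for local dynamics with bounded rates \textbf{(L1)}--\textbf{(L2)}, because spatial averages of local functions evolve by $O(t)\cdot$(local) changes uniformly. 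Hence the ergodic decomposition of $\rho_iP_t$ concentrates on measures with the same empirical averages as $\rho_i$, forcing $A(t)=I$.

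\textbf{Conclusion.} With $A(t)=I$, every extreme point is fixed, so every convex combination, i.e.\ every $\nu\in K$, is stationary. Every nonconstant periodic orbit of translation-invariant measures lies in $K$ by the previous proposition, and is therefore constant. This rules out non-trivial time-periodic orbits.
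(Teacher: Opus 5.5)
Your Step 1 matches the paper: $P_t$ maps $\mathscr{G}_\theta(\mathcal{H})$ into itself, by monotonicity of the relative entropy density and the variational principle. The bookkeeping in Step 2 is also fine, though more than needed. Continuity of barycentric coordinates on the finite-dimensional simplex $K$, together with $A(0)=I$, already gives $a_{ii}(t)\to 1$ and $A(t)=e^{tQ}$; you need neither $\mathbf{(R3)}$ nor mutual singularity.

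The gap is Step 3, which carries all the content, and neither mechanism you propose works.
\begin{itemize}
\item Entropy production cannot detect a transition $\rho_i\to\rho_j$. The relative entropy density with respect to $\mu$ vanishes identically on $K$, so it is constant along every path in $K$.
\item The ergodic-averages heuristic is wrong as stated. Local dynamics do change spatial averages in time $t$: independent spin flips started from the all-plus configuration change the magnetisation.
\item The estimate $\abs{\rho_iP_t(f)-\rho_i(f)}\le t\norm{\mathscr{L}f}_\infty$ you have in mind only expresses continuity. That is perfectly compatible with $A(t)=e^{tQ}$ and $Q\neq 0$.
\item Asserting that the ergodic decomposition of $\rho_iP_t$ lives on measures with the same averages as $\rho_i$ amounts to asserting $\rho_iP_t=\rho_i$. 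That is the claim itself.
\end{itemize}

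The missing ingredient, which the paper uses, is that translation-invariant dynamics preserve spatial ergodicity. If $\nu$ is ergodic under shifts, then so is $\nu P_t$ for every $t\geq 0$ \cite[Theorem I.4.15]{liggett_interacting_2005}. The correct version of your heuristic is that spatial averages at time $t$ are again almost surely deterministic, not that they are unchanged.

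The extreme points of $\mathscr{G}_\theta(\mathcal{H})$ are exactly its ergodic elements. Combined with Step 1, this gives $\rho_iP_t\in\mathrm{ex}\,K$ for all $t\geq 0$. So $t\mapsto\rho_iP_t$ is a weakly continuous path in a finite, hence discrete, set starting at $\rho_i$, and it must be constant. In your notation, each $A(t)$ is a $0$--$1$ stochastic matrix depending continuously on $t$ with $A(0)=I$, which forces $A(t)=I$. Affinity of $P_t$ then makes every $\nu\in K$ stationary, and your conclusion goes through.
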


\begin{proof}
    A translation-invariant Gibbs measure $\nu$ is an extremal point if and only if it is ergodic with respect to spatial shifts. Moreover, by \cite[Theorem I.4.15]{liggett_interacting_2005} if $\nu_0$ is ergodic, then so is $\nu_t$ for every $t \geq 0$. So if we choose an extremal Gibbs measure $\nu$ as initial condition, then by monotonicity of the free energy density, see \cite[Theorem 6]{jahnel_dynamical_2023}, we will remain in the set of extremal Gibbs measures for all times $t \geq 0$. But since $(\nu_t)_{t \geq 0}$ is continuous as a function of time and $\text{ex}\mathscr{G}_\theta(\mathcal{H})$ is finite and in particular totally disconnected, this can only be the case if $\nu_t = \nu$ for all times $t\geq 0$.  By using the extremal decomposition for Gibbs measures, see e.g. \cite[Section 6.8.4]{friedli_statistical_2017}, the time-stationarity of all extremal translation-invariant Gibbs measures implies that \textit{any} translation-invariant Gibbs measure is time-stationary for the process. In particular, there can be no non-trivial time-periodic orbits. 
\end{proof}

There is a general conjecture, see e.g. \cite{van_enter_aperiodicity_2014}, that in two dimensions finite-range models should always possess a finite number of extremal Gibbs states, all of which are translation-invariant. In particular, this would imply that all Gibbs states are translation-invariant. This would then also imply that there can be no non-trivial time-periodic behaviour in two-dimensional interacting particle systems that admit a time-stationary Gibbs measure.
In \cite{dobrushin-shlosman}, the authors make use of Pirogov--Sinai theory to show that at least for very low temperatures, this general conjecture holds. But the intermediate temperature regime is still open. 
For the Potts model, the main result of \cite{coquille_gibbs_2014} in particular implies that for any temperature $\beta\geq 0$, there are only finitely many extremal Gibbs measures, but the technique does not easily extend to more general situations. 

In higher dimensions, this does not need to be the case as e.g. the example constructed in \cite[Section 4]{slawny_family_1974} shows. Indeed, despite only having a finite local state space and finite-range interactions in $d\geq 3$, there is not just an infinite but even an \textit{uncountable} number of translation-invariant extremal Gibbs measures at sufficiently low temperatures. 

To sum up the preceding discussion, at least in the translation-invariant case, one possible strategy to rule out stable time-periodic orbits in one and two dimensions would be to show that (a) any sufficiently nice interacting particle system admits a time-stationary Gibbs measure (with a sufficiently regular Hamiltonian) and (b) for sufficiently nice Hamiltonians,  the set of translation-invariant extremal Gibbs measures is always finite in two dimensions. 

\subsection{Towards a minimal example}
While the examples constructed in \cite{jahnel_class_2014} and \cite{jahnel_time-periodic_2025} show that $\mathbf{(TTSB)}$ is indeed possible in finite dimensions, even in non-degenerate interacting particle systems, their definition is quite implicit and not particularly well suited for further analysis. Therefore, it is desirable to find a sufficiently simple toy model, that (a) exhibits time-periodic behaviour, (b) is amenable to mathematical analysis, and (c) is non-trivial. 

Numerical simulations suggest that periodic behavior is not a rare phenomenon. Several interacting particle systems with explicit dynamics are known to exhibit periodic behavior in simulations, at least in three and higher dimensions. 
Three notable examples are the \textit{nonreciprocal Ising model}, see e.g.\cite{collet_rhythmic_2016, avni_nonreciprocal_2025,guislain_collective_2024}, the \textit{cycle conform model}, see \cite[Section 1.9]{swart_course_2026}, and \textit{(self-)driven clock models}, see \cite{maes_rotating_2011,wu_synchronization_2026}. One feature all of these examples share is that they are all non-reversible. 
While the numerical evidence for the existence of time-periodic behaviour in all these models is quite substantial, at the moment only their mean-field counterparts can be rigorously shown to admit time-periodic solutions.

\section{Proof strategy}\label{section:proof-strategy}
The proof strategy for Theorem \ref{theorem:main-result} is inspired by \cite{jahnel_long-time_2025} and essentially consists of two main steps that we will now explain briefly before we start with the actual mathematics. Let us already note that apart from the very last step in the proof of Proposition \ref{proposition:time-averaged-non-reversible-holley-stroock} all the technical results derived in the forthcoming sections hold in any dimension $d\in \N$ and can therefore be used for future investigations in arbitrary dimensions. Moreover, only one building block, namely Lemma \ref{lemma:monotonicity-alpha}, seems to require the product structure of $\mu$. 

\subsection{Finite-volume relative entropy loss and stationary orbits}
The first technical result is the following \textit{time-averaged} version of the results in \cite{ramirez_uniqueness_2002} which also extends the classical result beyond finite-range interactions and from binary to general finite local state spaces. 

The main difference to \cite[Proposition 4.1]{jahnel_long-time_2025} is that we replace the assumption of reversibility, which provides a local relation between the rates and the reversible measure, by the assumption that the process leaves some product measure invariant. 

\begin{proposition}[Time-averaged entropy loss principle]\label{proposition:time-averaged-non-reversible-holley-stroock}
    Assume that $d\in \{1,2\}$, that $\mathscr{L}$ satisfies $\mathbf{(R1)-(R4)}$ and admits a stationary measure $\mu$ which is a product measure. If $\nu \in \calM_1(\Omega)$ is such that 
    \begin{enumerate}[\bfseries (M1)]
        \item for all $\eta \in \Omega, \Lambda \Subset \Z^d$, and $s\geq 0$ it holds that $\nu P_s(\eta_\Lambda)>0$, and 
        \item for all $\Lambda \Subset \Z^d$ we have $\int_0^T g_\Lambda^\mathscr{L}(\nu P_s \lvert \mu) ds = 0$,
    \end{enumerate}
    then we have $\nu = \mu$. 
\end{proposition}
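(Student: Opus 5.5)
The argument is a Holley--Stroock type entropy argument, run on the time average. It has four steps: compute $g_\Lambda$, average in time, compare volumes, and conclude via a Dirichlet form. Throughout, $\Lambda_n=[-n,n]^d$. Condition $\mathbf{(M1)}$ makes all cylinder probabilities $\nu_s(\eta_\Lambda)$ positive, so $h_\Lambda(\nu_s|\mu)$ is differentiable in $s$ and we may differentiate under the finite sum.

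\textbf{Step 1: decomposition of $g_{\Lambda_n}$.} The derivative is
\begin{align*}
g_{\Lambda_n}(\nu_s|\mu)=\int \mathscr{L}\Big(\log\tfrac{\nu_s(\cdot_{\Lambda_n})}{\mu(\cdot_{\Lambda_n})}\Big)\,d\nu_s .
\end{align*}
The sum over $x$ splits into sites $x\in\Lambda_n$ and sites outside. Sites outside $\Lambda_n$ do not affect the $\Lambda_n$-marginal, so only $x\in\Lambda_n$ contribute. For those sites the rates depend on the configuration outside $\Lambda_n$. We replace them by the conditional averaged rates $\bar c^n_x(\eta_{\Lambda_n},i)=\nu_s[c_x(\cdot,i)\mid\eta_{\Lambda_n}]$. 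This turns the $\Lambda_n$-marginal into a finite-state Markov chain, up to an error bounded by $\sum_{x\in\Lambda_n}\sum_{y\notin\Lambda_n}\delta_y c_x$. Condition $\mathbf{(R4)}$ bounds this error by $C|\partial\Lambda_n|$, which is $o(n^{d})$ and in fact $O(n^{d-1})$.

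Next we use stationarity of the product measure $\mu$. Because $\mu$ is a product measure, $\mu_{\Lambda_n}$ is invariant, up to a boundary error of the same order, for the finite chain with rates $\mu$-averaged over the outside. Using $\log a\le a-1$, the principal part then becomes
\begin{align*}
-\sum_{x\in\Lambda_n}\sum_{i}\int \bar c_x\,\Phi\Big(\tfrac{f_n(\eta^{x,i})}{f_n(\eta)}\Big)\,d\nu_s\;\le\;0,
\qquad f_n=\tfrac{d\nu_{s,\Lambda_n}}{d\mu_{\Lambda_n}},
\end{align*}
where $\Phi(u)=u-1-\log u\ge0$. This yields
\begin{align*}
g_{\Lambda_n}(\nu_s|\mu)\le -\mathbf{c}\,D_{\Lambda_n}(\nu_s)+C|\partial\Lambda_n|,
\end{align*}
where $D_{\Lambda_n}$ is the entropy-production functional $\sum_{x\in\Lambda_n}\sum_i\int\Phi(\cdot)\,d\nu_s$, and we used $\mathbf{(R3)}$ for the lower bound $\mathbf{c}$ on the rates. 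This step is where the product structure of $\mu$ enters (the analogue of Lemma~\ref{lemma:monotonicity-alpha}). It is also the delicate point in the non-reversible case: $\log a\le a-1$ must be combined with the invariance identity $\sum_\eta \mu(\eta)\,\mathscr{L}^{*}\ldots=0$ in place of detailed balance. I follow the approach of Ramirez--Varadhan, which does exactly this.

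\textbf{Step 2: time averaging.} By $\mathbf{(M1)}$ and $\mathbf{(M2)}$ we have $\int_0^T g_{\Lambda}(\nu_s|\mu)\,ds=0$ for every $\Lambda$. For a fixed window $\Lambda_k\subset\Lambda_n$, the quantity $D_{\Lambda_n}$ dominates the sum of the local contributions in disjoint translates of blocks. Combined with Step 1 this gives
\begin{align*}
\int_0^T D_{\Lambda_n}(\nu_s)\,ds\le C T\, n^{d-1}.
\end{align*}
This bound alone is too weak: dividing by $n^d$ only shows that the density of entropy production vanishes. To get more, we use an annulus (dyadic shell) argument. Let $a_n=\int_0^T D_{\Lambda_n}(\nu_s)\,ds$ and write $a_n=\sum_{m\le n}(\text{shell contributions})$. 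A more careful version of Step 1 bounds the boundary term by the shell quantity itself: by Cauchy--Schwarz, $|\text{bdry}_n|\lesssim \sqrt{|\partial\Lambda_n|\,(a_{n+1}-a_{n})}$. Together with $a_n\le \text{bdry}_n$ this gives
\begin{align*}
a_n^2\lesssim n^{d-1}(a_{n+1}-a_n).
\end{align*}
If $a_{n_0}>0$ for some $n_0$, summing $\frac{1}{a_n}-\frac{1}{a_{n+1}}\gtrsim n^{1-d}$ gives $\frac{1}{a_{n_0}}\gtrsim\sum_{n\ge n_0} n^{1-d}=\infty$ for $d\le 2$, a contradiction. Hence $a_n=0$ for all $n$. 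This is the only place where $d\in\{1,2\}$ is used. I expect this recursive boundary estimate to be the main obstacle. It requires the boundary error of Step 1 to be controlled by the entropy production in an annulus, rather than by the crude bound $O(n^{d-1})$, and this is where the $|y|$-weight in $\mathbf{(R4)}$ is needed to handle long-range tails.

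\textbf{Step 3: conclusion.} If $D_{\Lambda_n}(\nu_s)=0$ for a.e.\ $s\in[0,T]$, then, since every rate is positive, $f_n(\eta^{x,i})=f_n(\eta)$ for all $x\in\Lambda_n$ and all $i$. Hence $f_n$ is constant, and so $\nu_{s,\Lambda_n}=\mu_{\Lambda_n}$ for a.e.\ $s$. Letting $n\to\infty$ gives $\nu_s=\mu$ for a.e.\ $s$. By weak continuity of $s\mapsto\nu_s$, $\nu_s=\mu$ for all $s\in[0,T]$, and in particular $\nu=\nu_0=\mu$.
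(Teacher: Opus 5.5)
\paragraph{Gap in Step 2.} Your Steps 1 and 3 agree with the paper. Step 1 is the Ramirez--Varadhan rewriting of $g_\Lambda$ as $-\sum \nu_s(\eta_\Lambda)\,c^{\Lambda,\nu_s}_x(\eta_\Lambda,j)\,\Phi(\cdot)$ plus an error term. After subtracting the invariance identity $\int \mathscr{L}(\frac{d\nu_s}{d\mu}|_{\calF_\Lambda})\,d\mu=0$, that error involves only $c^{\Lambda,\nu_s}_x-c^{\Lambda,\mu}_x$. However, the decisive recursive estimate in Step 2 is not supplied, and the form in which you state it is false.

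The bound $|\mathrm{bdry}_n|\lesssim\sqrt{|\partial\Lambda_n|\,(a_{n+1}-a_n)}$ fails as soon as the rates have range larger than one. Take $\rho_{\Lambda_{n+1}}=\rho_{\Lambda_n}\otimes\mu_{\Lambda_{n+1}\setminus\Lambda_n}$. Then $f_{n+1}=f_n$, hence $a_{n+1}=a_n$. Yet correlations of $\Lambda_n$ with sites at distance at least $2$ can still make $c^{\Lambda_n,\rho}_x\neq c^{\Lambda_n,\mu}_x$, so the error term can be nonzero. Under $\mathbf{(R4)}$ the range may be infinite, so $\gamma_n(x)=\sum_{y\notin\Lambda_n}\delta_y c_x(\cdot)$ is positive for every $x\in\Lambda_n$, not only near $\partial\Lambda_n$. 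No shell of fixed width can absorb the error.

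Even granting $a_n^2\lesssim n^{d-1}(a_{n+1}-a_n)$, the telescoping goes the wrong way. We have $\frac1{a_n}-\frac1{a_{n+1}}=\frac{a_{n+1}-a_n}{a_na_{n+1}}$, and bounding this below by a multiple of $a_n^{-2}(a_{n+1}-a_n)$ would require $a_{n+1}\lesssim a_n$.

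\paragraph{The missing idea.} You need to abandon shells and work with $\gamma$-weighted quantities.
\begin{enumerate}
\item Bound the error by $2\sum_{x\in\Lambda_n}\gamma_n(x)\beta_n(x,\nu_s)$ using Lemma \ref{lemma:quantitative-differentiation}.
\item Use $\beta_n^2\le C\alpha_n$ and Cauchy--Schwarz to get $\sum_{x}\int_0^T\alpha_n(x,\nu_s)\,ds\lesssim\sqrt{n^{d-1}\delta_n}$, where $\delta_n:=\sum_{x\in\Lambda_n}\gamma_n(x)\int_0^T\alpha_n(x,\nu_s)\,ds$.
\item Use the \emph{site-wise} monotonicity $\alpha_k(x,\rho)\le\alpha_n(x,\rho)$ for $x\in\Lambda_k\subset\Lambda_n$ (Lemma \ref{lemma:monotonicity-alpha}). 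This is where the product structure is genuinely needed; your superadditivity over disjoint blocks is not the right statement.
\item Use $\sup_x\sum_k\gamma_k(x)=\mathbf{C}_1<\infty$. This is exactly where the weight $|y|$ in $\mathbf{(R4)}$ enters, since a pair $(x,y)$ contributes to $\gamma_k(x)$ for at most $|x-y|$ values of $k$.
\end{enumerate}
Items 3 and 4 together give $\sum_x\int_0^T\alpha_n(x,\nu_s)\,ds\ge\mathbf{C}_1^{-1}\sum_{k\le n}\delta_k$. The recursion then closes on the partial sums $S_n=\sum_{k\le n}\delta_k$ as $S_n^2\le\mathbf{C}\,n^{d-1}(S_n-S_{n-1})$. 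Hence $\frac1{S_{n-1}}-\frac1{S_n}\ge\frac{S_n-S_{n-1}}{S_n^2}\ge\mathbf{C}^{-1}n^{1-d}$, whose sum diverges for $d\le2$.

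Your functional $D$ could replace $\alpha$ here. It dominates $\alpha$, and it is site-wise monotone for product $\mu$ because $(a,b)\mapsto a\Phi(rb/a)$ is jointly convex and $1$-homogeneous. But some form of this weighted partial-sum device is needed to close the argument.
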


The proof of this can be found in Section \ref{section:proof-relative-entropy-loss-principle} and morally follows a similar strategy as in \cite{jahnel_long-time_2025} but the details are quite different, since we do not have reversibility at our hands. Therefore, we make use of  a different rewriting of the finite-volume relative entropy loss which first appeared in  \cite{ramirez_uniqueness_2002}. 

Since condition $\mathbf{(M2)}$ is directly implied by periodicity and the fundamental theorem of calculus, our main result Theorem \ref{theorem:main-result} will follow by showing that $\mathbf{(M1)}$ is satisfied along time-periodic orbits. But this was already established in \cite{jahnel_long-time_2025}. Let us nevertheless recall the main ideas. 

\subsection{The positive-mass property}

In the simple setting of an irreducible continuous-time Markov chain on finite state space $X$, it is easy to show that there exist constants $\rho, \tau >0$ such that for any initial distribution $\nu$ and all states $x \in X$ and times $t\geq \tau >0$, the probability of being in state $x$ at time $t$ is at least $\rho$, i.e, we have $\nu_t(x)\geq \rho >0$. In the setting of infinite-volume interacting particle systems that are irreducible in a suitable way, an analogous property should be true, but here it is not as straightforward to see. 

The key idea to make this intuition precise for infinite volume systems is to compare the dynamics to an interacting particle system in which all of the sites inside of a finite volume $\Lambda$ behave independently and flip with the minimal transition rate. A Girsanov-type formula then allows one to compare this finite-volume perturbation with the original system and yields the following result.

\begin{proposition}[Positive-mass property] Assume that the rates of an interacting particle systems satisfy assumptions $\mathbf{(R1)-(R4)}$. Then, for all $\tau>0$ and $\Lambda \Subset \Z^d$, there exists a constant $C(\tau,\Lambda)>0$ such that, for any starting measure $\nu$ and any time $t\in[\tau,\infty)$, we have 
\begin{align*}
    \forall \eta \in \Omega \quad \nu_t(\eta_\Lambda) \geq C(\tau, \Lambda). 
\end{align*}
In particular, for all subsequential limits $\nu^* = \lim_{n \to \infty}\nu_{t_n}$ with $t_n \uparrow \infty$, we have 
\begin{align*}
    \forall \eta \in \Omega \forall \Lambda \Subset \Z^d \quad \nu^*(\eta_\Lambda) \geq C(\tau,\Lambda) >0. 
\end{align*}
\end{proposition}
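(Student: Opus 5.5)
The idea is to compare the true dynamics with a modified dynamics that is trivial inside $\Lambda$, and to pay for the change of measure with a Girsanov (Radon--Nikodym) density. Fix $\tau>0$ and $\Lambda \Subset \Z^d$. Since $\nu_t = \nu_{t-\tau} P_\tau$ for $t \geq \tau$, it suffices to show
\begin{align*}
    \inf_{\omega \in \Omega}\, \delta_\omega P_\tau([\eta_\Lambda]) \geq C(\tau,\Lambda) > 0 .
\end{align*}
Integrating this against the arbitrary starting law $\nu_{t-\tau}$ then gives the claim uniformly in $\nu$ and in $t\geq\tau$.

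First I introduce the modified generator $\tilde{\mathscr{L}}$. In it, every site $x \in \Lambda$ flips to each $i \neq \eta_x$ at the constant rate $\mathbf{c}$ from $\mathbf{(R3)}$, independently of everything else. Sites outside $\Lambda$ keep their original rates $c_x(\eta,\cdot)$, which may still depend on the spins inside $\Lambda$; this does not matter for what follows. Under the modified process $\tilde{\mathbb{P}}_\omega$, the coordinates in $\Lambda$ are independent continuous-time Markov chains on $\{1,\dots,q\}$ with uniform rates. Hence
\begin{align*}
    \tilde{\mathbb{P}}_\omega(\eta_\Lambda(\tau) = \eta_\Lambda) = p_\tau^{\abs{\Lambda}},
\end{align*}
where $p_\tau>0$ is the single-site transition probability, which does not depend on the start or target state. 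The same chain also has a probability $\geq e^{-K\abs{\Lambda}\tau}$ of making at most a bounded number of jumps per site, with $K$ depending only on $q$ and $\mathbf{c}$.

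Next I compare path measures. Both processes are realised, for instance by a graphical construction with Poisson clocks, on the Skorokhod space of paths in $\Omega$ over $[0,\tau]$. The two generators differ only in the rates at the finitely many sites of $\Lambda$, and these rates are bounded above by $\mathbf{(R1)}$ and below by $\mathbf{(R3)}$. The law of the true process restricted to $\mathcal{F}_\tau$ is therefore absolutely continuous with respect to $\tilde{\mathbb{P}}_\omega$, with density
\begin{align*}
    \frac{d\mathbb{P}_\omega}{d\tilde{\mathbb{P}}_\omega} = \exp\Big(-\int_0^\tau \sum_{x\in\Lambda}\big(c_x(\eta_s) - (q-1)\mathbf{c}\big)\,ds\Big)\prod_{\text{jumps in }\Lambda}\frac{c_x(\eta_{s-},\eta_x(s))}{\mathbf{c}} .
\end{align*}
This density is bounded below by $e^{-\abs{\Lambda}\tau M}$, where $M = \sup_x\sum_{\xi_x}\norm{c_x(\cdot,\xi_x)}_\infty$, because each factor $c_x/\mathbf{c}$ is at least $1$. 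Combining this with the modified-chain estimate gives
\begin{align*}
    \mathbb{P}_\omega(\eta_\Lambda(\tau)=\eta_\Lambda) \geq e^{-\abs{\Lambda}\tau M}\, p_\tau^{\abs{\Lambda}} =: C(\tau,\Lambda),
\end{align*}
uniformly in $\omega$. The reduction above then yields the result for every $t \geq \tau$.

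The main obstacle is justifying the Girsanov formula in infinite volume. The outside dynamics is infinite-dimensional, and its rates couple to $\Lambda$. I would handle this with the graphical representation. Both processes are driven by the same Poisson clocks outside $\Lambda$, and the density only reweights the finitely many clocks inside $\Lambda$. Well-definedness under $\mathbf{(R1)}$–$\mathbf{(R4)}$ (which imply $\mathbf{(L1)}$–$\mathbf{(L2)}$) can then be obtained by a finite-volume approximation $\Lambda_n \uparrow \Z^d$ followed by a limit using Liggett's convergence. Alternatively, one can bound the density on the event of finitely many jumps in $\Lambda$, as above, which requires only that jumps in $\Lambda$ form a point process with bounded intensity.

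For the statement about limits, note that $[\eta_\Lambda]$ is clopen, so $\nu \mapsto \nu(\eta_\Lambda)$ is weakly continuous. Hence $\nu^*(\eta_\Lambda) = \lim_n \nu_{t_n}(\eta_\Lambda) \geq C(\tau,\Lambda)$, since $t_n \geq \tau$ eventually.
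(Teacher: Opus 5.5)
Your proposal is correct and is essentially the argument the paper sketches before deferring the details to the literature: compare with a dynamics in which the sites of $\Lambda$ flip independently at the minimal rate $\mathbf{c}$, bound the Girsanov density below by $e^{-\abs{\Lambda}\tau M}$ (each jump factor $c_x/\mathbf{c}$ is at least $1$), and justify the infinite-volume step by finite-volume approximation and convergence of semigroups. One small slip: the single-site transition probability of the uniform-rate chain depends on whether the target equals the starting state, so replace your equality by the uniform lower bound $\tilde{\mathbb{P}}_\omega(\eta_\Lambda(\tau)=\eta_\Lambda) \geq \bigl(\tfrac{1}{q}(1-e^{-q\mathbf{c}\tau})\bigr)^{\abs{\Lambda}}$, which is all you need.
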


The proof of this can be found in \cite{jahnel_long-time_2025}. Note that this in particular implies that the condition $\mathbf{(M1)}$ holds along time-periodic orbits and that the positive-mass property can be interpreted as a somewhat quantitative version of the noisy nature of the dynamics. Even if we start our process with a point mass $\delta_\omega$ at some configuration $\omega \in \Omega$ as initial condition, the distribution of the process at any positive time $t>0$ will already put positive mass on \textit{any} cylinder set $[\eta_\Lambda]$. 

\section{Proof of the time-averaged relative entropy loss principle}\label{section:proof-relative-entropy-loss-principle}

Unless stated otherwise, we will from now on always assume that the transition rates of our dynamics satisfy $\mathbf{(R1)-(R4)}$. 

\subsection{Rewriting the finite-volume relative entropy loss}
We start our proof by obtaining a more convenient representation of the relative entropy loss in finite volumes $\Lambda \Subset \Z^d$. Recall that the relative entropy loss in the finite volume $\Lambda$ is defined by
\begin{align*}
    g_\Lambda^\mathscr{L}(\nu \lvert \mu) = \frac{d}{dt}\lvert_{t=0}h_\Lambda(\nu \lvert \mu), \quad \nu \in \calM_1(\Omega).
\end{align*}
The first step is to rewrite this in a suitable way that will allow us to separate the (negative) contribution from the bulk from the (possibly positive) finite volume error terms. The expression we derive here is different from the one in \cite[Lemma 5.3]{jahnel_long-time_2025} and more similar to the one in \cite[Lemma 23]{jahnel_dynamical_2023}. However, in the latter reference, we were just interested in the rough asymptotics of the error term and proved that it is of boundary order. This is not sufficient for our goal in this article and we therefore have to keep track of the error term and will later make use of its explicit form. 

\begin{lemma}\label{lemma:finite-volume-relative-entropy-loss-non-reversible}
    For $\Lambda\Subset \Z^d$ and $\nu \in \calM_1(\Omega)$ with $\nu(\eta_\Lambda)>0$ for all $\eta_\Lambda \in \Omega_\Lambda$ we have 
    \begin{align*}
        g_\Lambda^\mathscr{L}(\nu\lvert \mu) 
    =
    &-\sum_{\eta_\Lambda}\sum_{x \in \Lambda}\sum_{j \neq \eta_x}F\left(\frac{\nu(\eta_\Lambda)}{\nu(\eta_\Lambda^{x,j})}\frac{\mu(\eta_\Lambda^{x,j})}{\mu(\eta_\Lambda)}\right)\frac{\mu(\eta_\Lambda)}{\mu(\eta_{\Lambda}^{x,j})}\frac{\nu(\eta^{x,j}_\Lambda)}{\nu(\eta_\Lambda)}\int_{\eta_\Lambda}c_x(\omega,j)\nu(d\omega)
    \\\
    -
    &\sum_{\eta_\Lambda}\sum_{x \in \Lambda}\sum_{j \neq \eta_x}\left[\int_{\eta_\Lambda}c_x(\omega,j)\nu(d\omega) - \frac{\mu(\eta_\Lambda)}{\mu(\eta_\Lambda^{x,j})}\frac{\nu(\eta^{x,j}_\Lambda)}{\nu(\eta_\Lambda)}\int_{\eta_\Lambda}c_x(\omega,j)\nu(d\omega)\right] 
\end{align*}
where we use the notation $F(x) = x \log x - x + 1$ for $x\geq 0$ with the convention that $0\log 0 = 0$. 
\end{lemma}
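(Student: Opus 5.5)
The plan is to differentiate $h_\Lambda(\nu_t\lvert\mu)$ directly at $t=0$, then algebraically rewrite the logarithmic terms using $F$. Two preliminary facts are needed.

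\emph{Full support of $\mu$.} Since $\mu$ is stationary, $\mu=\mu P_t$ for every $t>0$. The positive-mass property then gives $\mu(\eta_\Lambda)\geq C(\tau,\Lambda)>0$ for all $\eta_\Lambda$, so all ratios involving $\mu$ are well defined.

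\emph{Differentiability.} The indicator $1_{[\eta_\Lambda]}$ is a local function and hence lies in the core $D(\Omega)$. Therefore $t\mapsto \nu_t(\eta_\Lambda)$ is differentiable with
\begin{align*}
\frac{d}{dt}\Big\lvert_{t=0}\nu_t(\eta_\Lambda)=\nu\big(\mathscr{L}1_{[\eta_\Lambda]}\big)=\sum_{x\in\Lambda}\sum_{j\neq\eta_x}\Big[\int_{\eta^{x,j}_\Lambda}c_x(\omega,\eta_x)\,\nu(d\omega)-\int_{\eta_\Lambda}c_x(\omega,j)\,\nu(d\omega)\Big].
\end{align*}
Only sites $x\in\Lambda$ contribute, because flips outside $\Lambda$ do not change $1_{[\eta_\Lambda]}$. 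The assumption $\nu(\eta_\Lambda)>0$, together with continuity in $t$, makes $t\mapsto\nu_t(\eta_\Lambda)\log\frac{\nu_t(\eta_\Lambda)}{\mu(\eta_\Lambda)}$ differentiable at $t=0$.

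\emph{Derivative of the entropy.} Since $\Omega_\Lambda$ is finite, we can differentiate term by term:
\begin{align*}
g^{\mathscr{L}}_\Lambda(\nu\lvert\mu)=\sum_{\eta_\Lambda}\dot\nu(\eta_\Lambda)\Big(\log\tfrac{\nu(\eta_\Lambda)}{\mu(\eta_\Lambda)}+1\Big),
\end{align*}
where $\dot\nu(\eta_\Lambda)$ denotes the derivative above. The contribution of the $+1$ vanishes because $\sum_{\eta_\Lambda}\nu_t(\eta_\Lambda)=1$.

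\emph{Reindexing the gain term.} Insert the formula for $\dot\nu(\eta_\Lambda)$. In the gain term, use the bijection $(\eta_\Lambda,x,j)\mapsto(\eta^{x,j}_\Lambda,x,\eta_x)$ of the index set $\{(\eta_\Lambda,x,j): x\in\Lambda,\ j\neq\eta_x\}$ onto itself. After this substitution both terms are integrals of $c_x(\omega,j)$ over $[\eta_\Lambda]$. Writing $R:=\int_{\eta_\Lambda}c_x(\omega,j)\,\nu(d\omega)$ and
\begin{align*}
a:=\frac{\nu(\eta_\Lambda)}{\nu(\eta^{x,j}_\Lambda)}\frac{\mu(\eta^{x,j}_\Lambda)}{\mu(\eta_\Lambda)},
\end{align*}
the expression becomes
\begin{align*}
g^{\mathscr{L}}_\Lambda(\nu\lvert\mu)=\sum_{\eta_\Lambda}\sum_{x\in\Lambda}\sum_{j\neq\eta_x} R\,\big(\log\tfrac{\nu(\eta^{x,j}_\Lambda)}{\mu(\eta^{x,j}_\Lambda)}-\log\tfrac{\nu(\eta_\Lambda)}{\mu(\eta_\Lambda)}\big)=-\sum_{\eta_\Lambda}\sum_{x\in\Lambda}\sum_{j\neq\eta_x} R\log a.
\end{align*}

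\emph{Introducing $F$.} Use the pointwise identity $a\log a=F(a)+a-1$, which gives
\begin{align*}
-R\log a=-\frac{R}{a}\,(a\log a)=-\frac{R}{a}F(a)-\Big(R-\frac{R}{a}\Big).
\end{align*}
Since $\frac1a=\frac{\mu(\eta_\Lambda)}{\mu(\eta^{x,j}_\Lambda)}\frac{\nu(\eta^{x,j}_\Lambda)}{\nu(\eta_\Lambda)}$, the two pieces are exactly the two lines in the statement of Lemma~\ref{lemma:finite-volume-relative-entropy-loss-non-reversible}.

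\emph{Where care is needed.} None of the steps is deep. The points that require attention are:
\begin{itemize}
\item justifying the exchange of the derivative with the finite sum, including that the relevant densities stay positive near $t=0$;
\item checking that the reindexing map is indeed a bijection of the index set.
\end{itemize}
Stationarity of $\mu$ enters only through the full-support property.
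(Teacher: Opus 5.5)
Your proposal is correct and takes the same route as the paper. You write $g_\Lambda^{\mathscr{L}}(\nu\lvert\mu)=\sum_{\eta_\Lambda}\nu(\mathscr{L}\mathbf{1}_{\eta_\Lambda})\log\frac{\nu(\eta_\Lambda)}{\mu(\eta_\Lambda)}$, reindex the gain term via $(\eta_\Lambda,x,j)\mapsto(\eta_\Lambda^{x,j},x,\eta_x)$ to get $-\sum R\log a$, and then "add and subtract", which you make explicit through $a\log a=F(a)+a-1$; you also fill in two details the paper leaves implicit, namely that the $+1$ term vanishes and that $\mu$ is positive on cylinders by the positive-mass property.
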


\begin{proof}
    A direct calculation using the definition of the generator yields 
    \begin{align}\label{eqn:starting-point}
        \nonumber 
        g_\Lambda^\mathscr{L}(\nu|\mu) 
        &=  \sum_{\eta_\Lambda}\nu\left(\mathscr{L}\mathbf{1}_{\eta_\Lambda}\right)\log\left(\frac{\nu(\eta_\Lambda)}{\mu(\eta_\Lambda)}\right)
        \\\
        &=
        \sum_{\eta_\Lambda}\sum_{x\in \Lambda}\sum_{j}\int c_x(\omega,j)\left[\mathbf{1}_{\eta_\Lambda}(\omega^{x,j})-\mathbf{1}_{\eta_\Lambda}(\omega)\right]\nu(d\omega)\log\left(\frac{\nu(\eta_\Lambda)}{\mu(\eta_\Lambda)}\right)
        \\\
        &=
        \sum_{\eta_{\Lambda}}\sum_{x \in \Lambda}\sum_{i\neq \eta_x}
        \left[\int_{[\eta_{\Lambda}^{x,i}]}c_x(\omega, \eta_x)\nu(d\omega) - \int_{[\eta_{\Lambda}]}c_x(\omega,i)\nu(d\omega)\right]\log\left(\frac{\nu(\eta_{\Lambda})}{\mu(\eta_{\Lambda})}\right),
        \nonumber
    \end{align}
    where we used that 
    \begin{align*}
        \text{if $j\neq \eta_x$, then } &\int c_x(\omega,j)\left[\mathbf{1}_{\eta_\Lambda}(\omega^{x,j})-\mathbf{1}_{\eta_\Lambda}(\omega)\right]\nu(d\omega) = -\int_{[\eta_\Lambda]}c_x(\omega,j)\nu(d\omega), 
        \\\
        \text{ and if $j=\eta_x$, then } &\int c_x(\omega,j)\left[\mathbf{1}_{\eta_\Lambda}(\omega^{x,j})-\mathbf{1}_{\eta_\Lambda}(\omega)\right]\nu(d\omega) = \sum_{i \neq \eta_x}\int_{[\eta_\Lambda^{x,i}]}c_x(\omega,\eta_x)\nu(d\omega). 
    \end{align*}
    By rearranging the summation slightly, the identity \eqref{eqn:starting-point} yields 
    \begin{align*}
        g_\Lambda^\mathscr{L}(\nu|\mu) 
        &=
        \sum_{\eta_{\Lambda}}\sum_{x \in \Lambda}\sum_{i\neq \eta_x}
         \log\left(\frac{\mu(\eta_\Lambda)\nu(\eta_{\Lambda}^{x,i})}{\nu(\eta_\Lambda)\mu(\eta_\Lambda^{x,i})}\right)\int_{[\eta_\Lambda]}c_x(\omega,i)\nu(d\omega).
    \end{align*}
    By adding and subtracting suitable terms this now yields the claimed expression. 
\end{proof}

\subsection{The time-averaged zero-loss estimate}
The previously derived representation of the relative entropy loss in finite volumes $\Lambda \Subset \Z^d$ leads to the following inequality in case the time-averaged relative entropy loss vanishes.

\begin{lemma}\label{lemma:time-averaged-zero-loss-ineq}
Let $\Lambda \Subset \Z^d$ and $\nu \in \calM_1(\Omega)$ be such that there exists $T>0$ with 
\begin{enumerate}[i.]
    \item $\nu P_s(\eta_{\Lambda})>0$ for all $\eta \in \Omega_\Lambda$ and $s\in [0,T]$,
    \item and $\int_0^T g_\Lambda^\mathscr{L}(\nu P_s \lvert \mu)ds = 0$.
\end{enumerate}  
Then for the constant $\mathbf{c}>0$  from $\mathbf{(R3)}$, that does in particular not depend on $\Lambda$ and $\nu$, we have 
\begin{align*}
    &\frac{\mathbf{c}}{2}\int_0^T\sum_{\eta_\Lambda}\sum_{x \in \Lambda}\sum_{j \neq \eta_x}\left(\sqrt{\frac{\nu_s(\eta^{x,j}_\Lambda)}{\mu(\eta_{\Lambda}^{x,j})}}-\sqrt{\frac{\nu_s(\eta_\Lambda)}{\mu(\eta_\Lambda)}}\right)^2 \mu(\eta_\Lambda)ds
    \\\
    \leq 
    & 2 \int_0^T
    \sum_{\eta_\Lambda}\sum_{x \in \Lambda}\sum_{j\neq \eta_x} \gamma_\Lambda(x) \abs{\frac{\nu_s(\eta_\Lambda^{x,j})}{\mu(\eta_\Lambda^{x,j})}-\frac{\nu_s(\eta_\Lambda)}{\mu(\eta_\Lambda)}}\mu(\eta_\Lambda)ds,  
\end{align*}
where we use the notation 
\begin{align*}
    \gamma_\Lambda(x) := \sum_{y \notin \Lambda}\sum_{j=1}^q \delta_y \left( c_x(\cdot)\right).  
\end{align*}
In the case where $\Lambda = \Lambda_n = [-n,n]^d \cap \Z^d$, we will use the shorthand $\gamma_n$ instead of $\gamma_{\Lambda_n}$. 
\end{lemma}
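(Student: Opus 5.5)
The plan is to start from the representation in Lemma~\ref{lemma:finite-volume-relative-entropy-loss-non-reversible}, applied to $\nu_s=\nu P_s$ for each $s\in[0,T]$; hypothesis i.\ makes it applicable. Write $g_\Lambda^{\mathscr{L}}(\nu_s\lvert\mu)=-A_s-B_s$, where $A_s$ is the $F$-term and $B_s$ is the bracketed term. Abbreviate
\begin{align*}
a=\frac{\nu_s(\eta_\Lambda)}{\mu(\eta_\Lambda)},\qquad b=\frac{\nu_s(\eta_\Lambda^{x,j})}{\mu(\eta_\Lambda^{x,j})},\qquad u=\frac{a}{b}.
\end{align*}
Hypothesis ii.\ gives $\int_0^T A_s\,ds=-\int_0^T B_s\,ds\le\int_0^T\abs{B_s}\,ds$. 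It therefore suffices to bound $A_s$ from below by the Dirichlet-type form on the left-hand side, and $\abs{B_s}$ from above by the $\gamma_\Lambda$-term on the right-hand side, pointwise in $s$.

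\emph{Lower bound on $A_s$.} Since $F\ge 0$, every summand of $A_s$ is non-negative. By $\mathbf{(R3)}$ we have $\int_{\eta_\Lambda}c_x(\omega,j)\,\nu_s(d\omega)\ge \mathbf{c}\,\nu_s(\eta_\Lambda)=\mathbf{c}\,a\,\mu(\eta_\Lambda)$. The summand is $F(u)\,u^{-1}\int_{\eta_\Lambda}c_x(\omega,j)\,\nu_s(d\omega)$, so it is bounded below by $\mathbf{c}\,F(u)\,b\,\mu(\eta_\Lambda)$. The elementary inequality $F(u)\ge(\sqrt u-1)^2$ for $u\ge0$ then gives $F(u)\,b\ge(\sqrt a-\sqrt b)^2$. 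Hence
\begin{align*}
A_s\ge \mathbf{c}\sum_{\eta_\Lambda}\sum_{x\in\Lambda}\sum_{j\neq\eta_x}\bigl(\sqrt b-\sqrt a\bigr)^2\mu(\eta_\Lambda),
\end{align*}
which is even stronger than the claimed left-hand side by a factor $2$.

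\emph{Upper bound on $\abs{B_s}$; this is the main step.} Here the stationarity of the product measure $\mu$ enters. First replace the rate by a local one: set $\bar c_x(\eta_\Lambda,j):=\int c_x(\eta_\Lambda\omega_{\Lambda^c},j)\,\mu(d\omega)$. Then
\begin{align*}
\int_{\eta_\Lambda}c_x(\omega,j)\,\nu_s(d\omega)=\nu_s(\eta_\Lambda)\bigl(\bar c_x(\eta_\Lambda,j)+e\bigr),\qquad \abs{e}\le\sum_{y\notin\Lambda}\delta_y\bigl(c_x(\cdot,j)\bigr),
\end{align*}
by telescoping over the sites outside $\Lambda$. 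Summed over $j$, this error is controlled by $\gamma_\Lambda(x)$. I would need to be careful whether the oscillation of the individual rates $c_x(\cdot,j)$ or of the total rate is intended in the definition of $\gamma_\Lambda$; I read $\gamma_\Lambda$ as dominating $\sum_j\sum_{y\notin\Lambda}\delta_y(c_x(\cdot,j))$.

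With this replacement, the summand of $B_s$ equals $\mu(\eta_\Lambda)(a-b)(\bar c_x(\eta_\Lambda,j)+e)$. The main part is $\sum\mu(\eta_\Lambda)(a-b)\bar c_x(\eta_\Lambda,j)$. In its $b$-portion, reindex $\eta_\Lambda\leftrightarrow\eta_\Lambda^{x,j}$. The main part then becomes
\begin{align*}
\sum_{\eta_\Lambda}a\sum_{x\in\Lambda}\sum_{j\neq\eta_x}\Bigl[\mu(\eta_\Lambda)\bar c_x(\eta_\Lambda,j)-\mu(\eta_\Lambda^{x,j})\bar c_x(\eta_\Lambda^{x,j},\eta_x)\Bigr].
\end{align*}
The inner bracket vanishes for each fixed $\eta_\Lambda$. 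Indeed, it is exactly $-\mu(\mathscr{L}\mathbf 1_{\eta_\Lambda})=0$, by the computation in the proof of Lemma~\ref{lemma:finite-volume-relative-entropy-loss-non-reversible} with $\nu$ replaced by $\mu$. To rewrite the $\mu$-integrals of the rates as $\bar c$ we need $\mu(d\omega\lvert\eta_\Lambda)=\mu_{\Lambda^c}(d\omega_{\Lambda^c})$, which holds because $\mu$ is a product measure. This is the point where the product structure is essential.

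What remains of $B_s$ is the error part, bounded by $\sum\mu(\eta_\Lambda)\abs{a-b}\,\abs{e}\le\sum\gamma_\Lambda(x)\abs{b-a}\mu(\eta_\Lambda)$. The factor $2$ in the claim leaves room in case one also has to treat the $b$-portion with its own error before reindexing. Finally, integrating over $s\in[0,T]$ and combining the two bounds yields the claim. Measurability and integrability in $s$ are unproblematic, because $\Lambda$ is finite and $s\mapsto\nu_s(\eta_\Lambda)$ is continuous.
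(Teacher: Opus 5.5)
Your proof is correct and is essentially the paper's argument. You bound the $F$-term below via $\mathbf{(R3)}$ and an elementary inequality, then cancel the main part of the remaining term using stationarity of $\mu$ (your reindexed identity $\sum_{\eta_\Lambda} a\,\mu(\mathscr{L}\mathbf{1}_{\eta_\Lambda})=0$ is, by linearity, the paper's identity $\int \mathscr{L}\bigl(\tfrac{d\nu_s}{d\mu}\big\lvert_{\mathcal{F}_\Lambda}\bigr)\,d\mu=0$), leaving only the tail oscillation of the rates. Your constants are even sharper ($\mathbf{c}$ via $F(u)\ge(\sqrt{u}-1)^2$, and $1$ instead of $2$ on the right); one correction is that the product structure is not essential at this step, since the paper's proof uses the conditional average $c^{\Lambda,\mu}_x(\eta_\Lambda,j)=\mu(\eta_\Lambda)^{-1}\int_{[\eta_\Lambda]}c_x(\omega,j)\,\mu(d\omega)$ in place of your $\bar c_x$, which needs only stationarity and non-nullness of $\mu$ and still differs from $c^{\Lambda,\nu_s}_x(\eta_\Lambda,j)$ by at most $\sum_{y\notin\Lambda}\delta_y(c_x(\cdot,j))$.
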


For the proof of Lemma \ref{lemma:time-averaged-zero-loss-ineq} we make use of the following technical helper which is reminiscent of Lebesgue's differentiation theorem and already appeared in \cite{jahnel_long-time_2025}. It will also come in handy later, so we record it for future reference and give a quick proof.  
\begin{lemma}[Quantitative differentiation lemma]\label{lemma:quantitative-differentiation}
    Let $\nu$ be a probability measure such that $\nu(\eta_{\Lambda})>0$ for all $\eta \in \Omega$ and $\Lambda \Subset \Z^d$. Then, for any function $f:\Omega \to \R$ with the property 
    \begin{align*}
        \sum_{x \in \Z^d}\delta_x f < \infty,
    \end{align*}
    the following uniform error estimate holds for all $\eta \in \Omega$
    \begin{align*}
        \abs{\frac{1}{\nu(\eta_\Lambda)}\int_{[\eta_\Lambda]}f(\omega)\nu(d\omega) - f(\eta)}
        \leq 
        \sum_{x \notin \Lambda}\delta_x f. 
    \end{align*}
\end{lemma}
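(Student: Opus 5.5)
The idea is to write the cylinder average as an average of differences $f(\omega)-f(\eta)$ over $\omega\in[\eta_\Lambda]$, and then control each difference by a telescoping sum over sites outside $\Lambda$. Since $\nu(\eta_\Lambda)>0$, the conditional average is well defined, and
\begin{align*}
    \frac{1}{\nu(\eta_\Lambda)}\int_{[\eta_\Lambda]}f(\omega)\nu(d\omega) - f(\eta) = \frac{1}{\nu(\eta_\Lambda)}\int_{[\eta_\Lambda]}\left(f(\omega)-f(\eta)\right)\nu(d\omega).
\end{align*}
It therefore suffices to prove the pointwise bound $\abs{f(\omega)-f(\eta)}\leq \sum_{x\notin\Lambda}\delta_x f$ for every $\omega$ with $\omega_\Lambda=\eta_\Lambda$. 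Averaging this bound against the normalised measure $\nu(\cdot\cap[\eta_\Lambda])/\nu(\eta_\Lambda)$ then gives the claim.

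For the pointwise bound, fix an enumeration $x_1,x_2,\dots$ of $\Z^d\setminus\Lambda$. Define interpolating configurations $\zeta^{(n)}$ that agree with $\omega$ on $\Lambda\cup\{x_1,\dots,x_n\}$ and with $\eta$ elsewhere, so that $\zeta^{(0)}=\eta$, because $\omega_\Lambda=\eta_\Lambda$. Consecutive configurations $\zeta^{(n-1)}$ and $\zeta^{(n)}$ differ at most at the single site $x_n$. Hence $\abs{f(\zeta^{(n)})-f(\zeta^{(n-1)})}\leq \delta_{x_n}f$, and telescoping gives
\begin{align*}
    \abs{f(\zeta^{(N)})-f(\eta)}\leq \sum_{n=1}^N\delta_{x_n}f \leq \sum_{x\notin\Lambda}\delta_x f.
\end{align*}

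It remains to pass to the limit $N\to\infty$. As $N\to\infty$, $\zeta^{(N)}\to\omega$ in the product topology. The main, though minor, obstacle is justifying $f(\zeta^{(N)})\to f(\omega)$, since $f$ is not assumed continuous in the statement. I would handle this directly from the finite total oscillation: $\zeta^{(N)}$ and $\omega$ differ only on $\{x_{N+1},x_{N+2},\dots\}$, so the same telescoping idea gives $\abs{f(\omega)-f(\zeta^{(N)})}\leq\sum_{n>N}\delta_{x_n}f$. This requires changing infinitely many sites, which in turn needs continuity of $f$. In the paper's setting $f$ lies in $D(\Omega)\subset C(\Omega)$, and I would invoke that continuity; the tail sum $\sum_{n>N}\delta_{x_n}f$ vanishes by summability in any case. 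Letting $N\to\infty$ then yields the pointwise bound, and integrating over $[\eta_\Lambda]$ completes the proof.
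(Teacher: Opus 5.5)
Your argument is correct and is the same as the paper's: telescope $f(\omega)-f(\eta)$ along an enumeration of $\Lambda^c$, bound each single-site increment by $\delta_{x_n}f$, and integrate over $[\eta_\Lambda]$. You are right that continuity is genuinely needed, and the paper's infinite telescoping sum uses it implicitly. For example, the indicator of $\{\omega:\omega_x=1\text{ for all but finitely many }x\}$ has all $\delta_x f=0$, yet it violates the bound for $\eta\equiv 1$ and $\nu$ the uniform product measure. Your tail-sum detour is circular and unnecessary: $\zeta^{(N)}\to\omega$ together with continuity already gives $f(\zeta^{(N)})\to f(\omega)$.
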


\begin{proof}
    Fix $\eta \in \Omega$ and $\Lambda \Subset \Z^d$. Then we can fix an enumeration of the vertices in $\Lambda^c$ and write 
    $$[n] = \Lambda \cup \{x_1, \dots, x_n\}.$$ 
    By a telescope sum we see
    \begin{align*}
        f(\eta) - f(\omega)
        = \sum_{n=1}^\infty\left(f(\eta_{[n]}\omega_{[n]^c})-f(\eta_{[n-1]}\omega_{[n-1]^c})\right)
        \leq 
        \sum_{x \notin \Lambda}\delta_x f. 
    \end{align*}
    The claim now follows via integration. 
\end{proof}

\begin{proof}[Proof of Lemma \ref{lemma:time-averaged-zero-loss-ineq}]
    From the representation derived in Lemma \ref{lemma:finite-volume-relative-entropy-loss-non-reversible} and the assumption that $\int_0^T g^n_\mathscr{L}(\nu P_s \lvert \mu)ds = 0$, we directly obtain the identity 
    \begin{align}\label{eq:key-identity}
        &\int_0^T \sum_{\eta_\Lambda}\sum_{x \in \Lambda}\sum_{j \neq \eta_x}F\left(\frac{\nu_s(\eta_\Lambda)}{\nu_s(\eta_\Lambda^{x,j})}\frac{\mu(\eta_\Lambda^{x,j})}{\mu(\eta_\Lambda)}\right)\frac{\mu(\eta_\Lambda)}{\mu(\eta_{\Lambda}^{x,j})}\frac{\nu_s(\eta^{x,j}_\Lambda)}{\nu_s(\eta_\Lambda)}\int_{\eta_\Lambda}c_x(\omega,j)\nu_s(d\omega) \ ds
        \\\
        =
    -&\int_0^T\sum_{\eta_\Lambda}\sum_{x \in \Lambda}\sum_{j \neq \eta_x}\left[\int_{\eta_\Lambda}c_x(\omega,j)\nu_s(d\omega) - \frac{\mu(\eta_\Lambda)}{\mu(\eta_\Lambda^{x,j})}\frac{\nu_s(\eta^{x,j}_\Lambda)}{\nu_s(\eta_\Lambda)}\int_{\eta_\Lambda}c_x(\omega,j)\nu_s(d\omega)\right] \ ds. \nonumber
    \end{align}
    Since $F(x) \geq \tfrac{1}{2}(1-\sqrt{x})^2$ for all $x>0$, the left-hand side of this identity can be bounded from below by
    \begin{align*}
        &\int_0^T \sum_{\eta_\Lambda}\sum_{x \in \Lambda}\sum_{j \neq \eta_x}F\left(\frac{\nu_s(\eta_\Lambda)}{\nu_s(\eta_\Lambda^{x,j})}\frac{\mu(\eta_\Lambda^{x,j})}{\mu(\eta_\Lambda)}\right)\frac{\mu(\eta_\Lambda)}{\mu(\eta_{\Lambda}^{x,j})}\frac{\nu_s(\eta^{x,j}_\Lambda)}{\nu(\eta_\Lambda)}\int_{\eta_\Lambda}c_x(\omega,j)\nu_s(d\omega) \ ds
        \\\
        \geq
        &\frac{1}{2}\int_0^T\sum_{\eta_\Lambda}\sum_{x \in \Lambda}\sum_{j \neq \eta_x}\left(1-\sqrt{\frac{\nu_s(\eta_\Lambda)}{\nu_s(\eta_\Lambda^{x,j})}\frac{\mu(\eta_\Lambda^{x,j})}{\mu(\eta_\Lambda)}}\right)^2 \frac{\mu(\eta_\Lambda)}{\mu(\eta_{\Lambda}^{x,j})}\frac{\nu_s(\eta^{x,j}_\Lambda)}{\nu_s(\eta_\Lambda)}\int_{\eta_\Lambda}c_x(\omega,j)\nu_s(d\omega)\ ds 
        \\\
        \geq 
        &\frac{\mathbf{c}}{2}\int_0^T\sum_{\eta_\Lambda}\sum_{x \in \Lambda}\sum_{j \neq \eta_x}\left(\sqrt{\frac{\nu_s(\eta_\Lambda^{x,j})}{\mu(\eta_\Lambda^{x,j})}}-\sqrt{\frac{\nu_s(\eta_\Lambda)}{\mu(\eta_\Lambda)}}\right)\mu(\eta_\Lambda)\ ds
    \end{align*}

    To deal with the right-hand side of the identity \eqref{eq:key-identity} we first note that by stationarity and non-nullness of $\mu$ we have for any $\rho \in \calM_1(\Omega)$
    \begin{align}\label{eq:invariance-equation-mu}
    0 
    =
    \int_\Omega \mathscr{L} \left(\frac{d\rho}{d\mu}\Big\lvert_{\mathcal{F}_\Lambda}\right)(\omega) \mu(d\omega)
    &=
    \sum_{\eta_\Lambda}\sum_{x \in \Lambda}\sum_{j\neq \eta_x}\int_{[\eta_\Lambda]}c_x(\omega,j)\left(\frac{\rho(\eta_\Lambda^{x,j})}{\mu(\eta^{x,j}_\Lambda)}-\frac{\rho(\eta_\Lambda)}{\mu(\eta_\Lambda)}\right)\mu(d\omega)
    \\\
    &=
    -\sum_{\eta_\Lambda}\sum_{x \in \Lambda}\sum_{j\neq \eta_x}
    \mu(\eta_{\Lambda})c^{\Lambda,\mu}_x(\eta_\Lambda,j)\left[\frac{\rho(\eta_\Lambda^{x,j})}{\nu(\eta_\Lambda^{x,j})} - \frac{\mu(\eta_\Lambda)}{\mu(\eta_\Lambda^{x,j})}\right]
    \end{align}
    where we use the notation 
    \begin{align*}
        c^{\Lambda,\rho}_x(\eta_\Lambda,j) := \frac{1}{\rho(\eta_\Lambda)}\int_{[\eta_\Lambda]}c_x(\omega,j)\rho(d\omega), \quad \eta_\Lambda \in \Omega_\Lambda, 
    \end{align*}
    for a probability measure $\rho \in \calM_1(\Omega)$, $\Lambda \Subset \Z^d$, and $j \in \Omega_0$. By using the same notation, the right-hand side of \eqref{eq:key-identity} can be rewritten as
    \begin{align*}
        \int_0^T \sum_{\eta_\Lambda}\sum_{x \in \Lambda}\sum_{j \neq \eta_x}
        \mu(\eta_\Lambda) c^{\Lambda,\nu_s}_x(\eta_\Lambda,j)\left(\frac{\nu_s(\eta_\Lambda)}{\mu(\eta_\Lambda)}-\frac{\nu_s(\eta_\Lambda^{x,j})}{\mu(\eta_\Lambda^{x,j})}\right). 
    \end{align*}
    So we can use \eqref{eq:invariance-equation-mu} for every $\nu_s$, where $s \in [0,T]$, and combine it with \eqref{eq:key-identity} to obtain 
    \begin{align}\label{ineq:almost-done}
    &\int_0^T \sum_{\eta_\Lambda}\sum_{x \in \Lambda}\sum_{j \neq \eta_x}F\left(\frac{\nu_s(\eta_\Lambda)}{\nu_s(\eta_\Lambda^{x,j})}\frac{\mu(\eta_\Lambda^{x,j})}{\mu(\eta_\Lambda)}\right)\frac{\mu(\eta_\Lambda)}{\mu(\eta_{\Lambda}^{x,j})}\frac{\nu_s(\eta^{x,j}_\Lambda)}{\nu_s(\eta_\Lambda)}\int_{\eta_\Lambda}c_x(\omega,j)\nu_s(d\omega) \ ds
        \\\
        =
        &\int_0^T \sum_{\eta_\Lambda}\sum_{x \in \Lambda}\sum_{j \neq \eta_x}
        \mu(\eta_\Lambda) \left(c^{\Lambda,\mu}_x(\eta_\Lambda,j)-c^{\Lambda,\nu_s}_x(\eta_\Lambda,j)\right) \left(\frac{\nu_s(\eta_\Lambda^{x,j})}{\mu(\eta_\Lambda^{x,j})}-\frac{\nu_s(\eta_\Lambda)}{\mu(\eta_\Lambda)}\right).
    \end{align}
    By Lemma \ref{lemma:quantitative-differentiation}, we get the following \textit{uniform} estimate for any $\rho \in \calM_1(\Omega)$ and $\eta_\Lambda \in \Omega_\Lambda$
    \begin{align*}
        \abs{c^{\Lambda,\mu}_x(\eta_\Lambda,j)-c^{\Lambda,\rho}_x(\eta_\Lambda,j)} \leq 2 \sum_{y\notin \Lambda}\delta_y\left(c_x(\cdot, j)\right), \quad x \in \Lambda, j\in \Omega_0. 
    \end{align*}
    Now combining this with a simple application of the triangle inequality to \eqref{ineq:almost-done} yields the claimed estimate. 
\end{proof}

Motivated by the inequality derived in Lemma \ref{lemma:time-averaged-zero-loss-ineq}, let us introduce the following notation for $\rho \in \calM_1(\Omega)$, $\Lambda \Subset \Z^d$, and $x\in \Lambda$: 
\begin{align*}
    \alpha_\Lambda(x,\rho) &:= \sum_{\eta_\Lambda}\sum_{j\neq \eta_x}\left(\sqrt{\frac{\rho(\eta^{x,j}_\Lambda)}{\mu(\eta_{\Lambda}^{x,j})}}-\sqrt{\frac{\rho(\eta_\Lambda)}{\mu(\eta_\Lambda)}}\right)^2 \mu(\eta_\Lambda), 
    \\\
    \beta_\Lambda(x,\rho) &:= \sum_{\eta_\Lambda}\sum_{j\neq \eta_x}\abs{\frac{\rho(\eta_\Lambda^{x,j})}{\mu(\eta_\Lambda^{x,j})}-\frac{\rho(\eta_\Lambda)}{\mu(\eta_\Lambda)}}\mu(\eta_\Lambda).
\end{align*}
In the case where $\Lambda = \Lambda_n$, we will use the shorthand $\alpha_n$ and $\beta_n$ instead of $\alpha_{\Lambda_n}$ and $\beta_{\Lambda_n}$. 
With this notation at hand, we can express the result from Lemma \ref{lemma:time-averaged-zero-loss-ineq} as
\begin{align}\label{ineq:stationarity-derived-inequality}
    \frac{\mathbf{c}}{2}\sum_{x \in \Lambda}\int_0^T \alpha_\Lambda(x,\nu_s) ds 
    \leq 
    2 \sum_{x \in \Lambda} \gamma_\Lambda(x) \int_0^T \beta_\Lambda(x,\nu_s)ds. 
\end{align}
To get some intuition on the above inequality, note that in the case of nearest-neighbour interactions, the coefficients $\gamma_\Lambda(x)$ vanish if $x \notin \partial \Lambda$, so the sum on the right-hand side of \eqref{ineq:stationarity-derived-inequality} can be seen as a boundary contribution, whereas the sum on the left is the bulk contribution. 
Moreover, the $\alpha_\Lambda(x,\cdot)$ are clearly non-negative, continuous with respect to the weak topology on $\calM_1(\Omega)$ and one has 
\begin{align*}
    \rho = \mu \quad \iff \quad \forall \Lambda \Subset \Z^d \ \forall x \in \Lambda: \ \alpha_\Lambda(x,\rho) = 0. 
\end{align*}
It will therefore be our goal to show that these coefficients actually do vanish along time-periodic orbits.

\subsection{Properties of the $\gamma$-coefficients}
In the general case, where the interaction range is not bounded, the intuition as stated above can be made precise as follows. 

\begin{lemma}\label{lemma:properties-rho}
    Under assumption $\mathbf{(R4)}$ it holds that 
    \begin{align*}
        \mathbf{C}_1 := \sup_{x \in \Z^d} \sum_{n=1}^\infty \gamma_n(x) < \infty, 
        \quad \text{and} \quad 
        \mathbf{C_2} := \sup_{n \in \N}\frac{1}{n^{d-1}}\sum_{x \in \Lambda_n}\gamma_n(x) < \infty. 
    \end{align*}
\end{lemma}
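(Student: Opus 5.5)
Both bounds come from exchanging the order of summation and then using $\mathbf{(R4)}$. Write $a(z) := \sup_{x \in \Z^d}\delta_{x+z}c_x(\cdot)$, so that $\mathbf{(R4)}$ says $\sum_{z}\abs{z}a(z) < \infty$. The factor $\sum_{j=1}^q$ in the definition of $\gamma_\Lambda(x)$ only contributes a factor $q$, since the summand does not depend on $j$. Hence
\begin{align*}
    \gamma_n(x) \leq q\sum_{y \notin \Lambda_n}a(y-x).
\end{align*}
We use the sup-norm $\abs{\cdot}_\infty$ on $\Z^d$, which is comparable to $\abs{\cdot}$ up to dimensional constants, so $\sum_z \abs{z}_\infty a(z)<\infty$ as well.

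\textbf{The constant $\mathbf{C}_1$.} Fix $x$ and swap the sums:
\begin{align*}
    \sum_{n\geq 1}\gamma_n(x) \leq q\sum_{y \in \Z^d} a(y-x)\,\#\{n\geq 1 : y\notin\Lambda_n\} = q\sum_{y}a(y-x)\big(\abs{y}_\infty -1\big)_+ .
\end{align*}
Here we used $y \notin \Lambda_n \iff n < \abs{y}_\infty$. By the triangle inequality, $\abs{y}_\infty \leq \abs{y-x}_\infty + \abs{x}_\infty$, so this naive swap is not uniform in $x$ for far-away $x$.

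The fix is to look only at the indices $n$ with $x \in \Lambda_n$, i.e. $n \geq \abs{x}_\infty$. This is the only regime in which $\gamma_n(x)$ is used, since $\gamma_\Lambda(x)$ is only ever evaluated for $x\in\Lambda$, and I would read the supremum with this convention. For such $n$, a site $y\notin\Lambda_n$ must satisfy $\abs{y-x}_\infty \geq n+1-\abs{x}_\infty$. So the number of admissible $n \geq \abs{x}_\infty$ with $y \notin \Lambda_n$ is at most $\abs{y-x}_\infty$. This gives
\begin{align*}
    \sum_{n\geq\abs{x}_\infty}\gamma_n(x)\leq q\sum_z \abs{z}_\infty a(z),
\end{align*}
which is finite and uniform in $x$.

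\textbf{The constant $\mathbf{C}_2$.} Write
\begin{align*}
    \sum_{x\in\Lambda_n}\gamma_n(x) \leq q\sum_{z} a(z)\,\#\{x\in\Lambda_n : x+z\notin\Lambda_n\}.
\end{align*}
The set of $x \in \Lambda_n$ with $x+z \notin \Lambda_n$ lies within $\abs{z}_\infty$ of the boundary of the cube. Its cardinality is therefore at most
\begin{align*}
    \min\big(\abs{\Lambda_n},\, 2d\,\abs{z}_\infty(2n+1)^{d-1}\big) \leq 2d\,3^{d-1}n^{d-1}\abs{z}_\infty
\end{align*}
for $n\geq1$. Summing over $z$ and using $\mathbf{(R4)}$ gives
\begin{align*}
    \mathbf{C}_2\leq 2dq\,3^{d-1}\sum_z\abs{z}_\infty a(z)<\infty .
\end{align*}

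\textbf{Main obstacle.} The only delicate point is the uniformity in $x$ for $\mathbf{C}_1$, which forces the restriction to $n$ with $x\in\Lambda_n$. The key counting step is that, for fixed $x$ and $y$, the number of cubes containing $x$ but not $y$ is at most $\abs{x-y}_\infty$. Everything else is a Fubini swap plus the boundary-layer count.
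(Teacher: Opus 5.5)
Your proof is correct and is essentially the paper's argument: a Fubini swap with the count $\#\{n : x\in\Lambda_n,\ y\notin\Lambda_n\}\le\abs{x-y}$ for $\mathbf{C}_1$, and the boundary-layer count $\#\{x\in\Lambda_n : x+v\notin\Lambda_n\}\le d(2n+1)^{d-1}\abs{v}$ for $\mathbf{C}_2$. You make the restriction to indices $n$ with $x\in\Lambda_n$ explicit, and the paper uses exactly this restriction silently in its second equality for $\mathbf{C}_1$; without it the sum also picks up $\gamma_n(x)$ for all $n<\abs{x}_\infty$ and is in general not uniform in $x$, and the restricted version is all that is needed where $\mathbf{C}_1$ is applied in the proof of the time-averaged entropy loss principle.
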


\begin{proof}
    \textit{Ad $\mathbf{C}_1$}: For fixed $x\in\Z^d$ we have 
\begin{align*}
    \sum_{n=1}^\infty \gamma_n(x)
    &=
    \sum_{n=1}^\infty \sum_{y \notin \Lambda_n} \delta_y c_x(\cdot)
    \\\
    &=
    \sum_{y \in \Z^d}\delta_y c_x(\cdot)\abs{\{n \in \N: \ x \in \Lambda_n, y \notin \Lambda_n\}}
    \\\
    &\leq 
    \sum_{y \in \Z^d}\delta_y c_x(\cdot) \abs{x-y}.
\end{align*}
Now assumption $\mathbf{(R4)}$ yields a uniform in $x$ upper bound on this quantity. 

\medskip 
\noindent 
\textit{Ad $\mathbf{C}_2$}: Here we have for fixed $n \in  \N$
\begin{align*}
    \sum_{x \in  \Lambda_n}\sum_{y \notin \Lambda_n} \delta_y c_x(\cdot)
    &\leq
    \sum_{v \in \Z^d}\sum_{x \in \Lambda_n: \ x+v \notin \Lambda_n}\delta_{x+v}c_x(\cdot)
    \\\
    &\leq 
    d(2n+1)^{d-1} \sum_{v \in \Z^d}\abs{v}\sup_{x \in \Z^d}\delta_{x+v}c_x(\cdot)).
\end{align*}
This can be bounded from above, independent of $n$, by assumption $\mathbf{(R4)}$. 
\end{proof}
\subsection{Pointwise bound of the $\beta$-coefficients in terms of the $\alpha$-coefficients}

\begin{lemma}\label{lemma:upper-bound-beta}
    There exists a constant $C>0$ such that for all $\rho \in \calM_1(\Omega)$, $\Lambda \Subset \Z^d$ and $x \in \Lambda$ it holds that 
    \begin{align*}
        \beta_\Lambda(x,\rho)^2 \leq C \cdot \alpha_\Lambda(x,\rho). 
    \end{align*}
\end{lemma}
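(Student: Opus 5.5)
Write $f(\eta_\Lambda) := \rho(\eta_\Lambda)/\mu(\eta_\Lambda)$ for the density of $\rho$ with respect to $\mu$ on $\mathcal{F}_\Lambda$. The natural route is the standard comparison between an $L^1$-type Dirichlet form and the Hellinger-type Dirichlet form via Cauchy--Schwarz. We factor $a-b = (\sqrt a-\sqrt b)(\sqrt a+\sqrt b)$ for $a,b\geq 0$, so each summand of $\beta_\Lambda(x,\rho)$ reads
\begin{align*}
    \abs{f(\eta_\Lambda^{x,j})-f(\eta_\Lambda)}\mu(\eta_\Lambda)
    = \abs{\sqrt{f(\eta_\Lambda^{x,j})}-\sqrt{f(\eta_\Lambda)}}\left(\sqrt{f(\eta_\Lambda^{x,j})}+\sqrt{f(\eta_\Lambda)}\right)\mu(\eta_\Lambda).
\end{align*}
Cauchy--Schwarz over the pairs $(\eta_\Lambda,j)$ with weights $\mu(\eta_\Lambda)$ then gives
\begin{align*}
    \beta_\Lambda(x,\rho)^2 \leq \alpha_\Lambda(x,\rho)\cdot \sum_{\eta_\Lambda}\sum_{j\neq\eta_x}\left(\sqrt{f(\eta_\Lambda^{x,j})}+\sqrt{f(\eta_\Lambda)}\right)^2\mu(\eta_\Lambda).
\end{align*}
It therefore remains to bound the second factor by a constant independent of $\rho$, $\Lambda$ and $x$.

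For the second factor we use $(\sqrt a+\sqrt b)^2\leq 2(a+b)$. The term with $f(\eta_\Lambda)$ contributes at most $(q-1)\sum_{\eta_\Lambda} f(\eta_\Lambda)\mu(\eta_\Lambda) = (q-1)\rho(\Omega)=q-1$. The term with $f(\eta_\Lambda^{x,j})$ requires changing variables $\eta_\Lambda \mapsto \eta_\Lambda^{x,j}$, and this is where the product structure of $\mu$ enters. Since $\mu$ is a product measure, $\mu(\eta_\Lambda)/\mu(\eta_\Lambda^{x,j}) = \mu_x(\eta_x)/\mu_x(j)$ depends only on the single-site marginal $\mu_x$ of $\mu$ at $x$. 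Hence
\begin{align*}
    \sum_{\eta_\Lambda}\sum_{j\neq \eta_x} f(\eta_\Lambda^{x,j})\mu(\eta_\Lambda) = \sum_{\eta_\Lambda}\sum_{j\neq\eta_x}\rho(\eta_\Lambda^{x,j})\frac{\mu_x(\eta_x)}{\mu_x(j)} \leq (q-1)\sup_{x}\max_{i,j}\frac{\mu_x(i)}{\mu_x(j)}.
\end{align*}
Here we use that for each fixed $\xi_\Lambda=\eta_\Lambda^{x,j}$ the map is at most $(q-1)$-to-one.

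The main obstacle is to show that the single-site marginals are uniformly non-degenerate, that is, $\inf_{x,i}\mu_x(i)>0$; nothing in the statement gives this for free. We would derive it from stationarity together with $\mathbf{(R1)}$ and $\mathbf{(R3)}$. Applying $\mu(\mathscr{L}\mathbf{1}_{\{\eta_x=i\}})=0$ gives the balance relation
\begin{align*}
    \mu_x(i)\,\sup_\eta c_x(\eta)\;\geq\; \int_{\{\omega_x= i\}} c_x(\omega)\,\mu(d\omega) \;=\; \sum_{k\neq i}\int_{\{\omega_x=k\}}c_x(\omega,i)\,\mu(d\omega)\;\geq\; \mathbf{c}\,(1-\mu_x(i)).
\end{align*}
With $M:=\sup_{x}\sup_\eta c_x(\eta)<\infty$ from $\mathbf{(R1)}$, this yields $\mu_x(i)\geq \mathbf{c}/(M+\mathbf{c})$ uniformly in $x$ and $i$. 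Consequently $\mu_x(i)/\mu_x(j)\leq (M+\mathbf{c})/\mathbf{c}$, and we obtain the claim with $C = 2(q-1)\bigl(1+(M+\mathbf{c})/\mathbf{c}\bigr)$.

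If instead one wanted to avoid any dependence on the product structure, a possible fallback would be the symmetric bound $f(\eta_\Lambda^{x,j})\mu(\eta_\Lambda)\leq \rho(\eta_\Lambda^{x,j})\sup\mu(\eta_\Lambda)/\mu(\eta^{x,j}_\Lambda)$, which needs only uniform finite-energy of $\mu$. We expect, however, to rely on the product structure as above.
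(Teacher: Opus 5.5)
Your proposal is correct and follows the paper's route: factor $a-b=(\sqrt a-\sqrt b)(\sqrt a+\sqrt b)$, apply Cauchy--Schwarz to get $\beta_\Lambda^2\le \alpha_\Lambda$ times a second factor, and bound that factor using the product structure of $\mu$, where $\mu(\eta_\Lambda)/\mu(\eta_\Lambda^{x,j})=\mu_x(\eta_x)/\mu_x(j)$. Your balance argument giving $\mu_x(i)\geq \mathbf{c}/(M+\mathbf{c})$ from stationarity together with $\mathbf{(R1)}$ and $\mathbf{(R3)}$ fills a gap the paper leaves open: its constant $\delta$ for ``non-degenerate marginals'' is never defined, and it is never shown to be uniform in $x$, which matters because the rates, and hence $\mu$, need not be translation-invariant.
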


\begin{proof}
    By definition of $\beta_{\Lambda}(x,\rho)$ and the Cauchy--Schwarz inequality we have 
    \begin{align*}
        \beta_\Lambda(x,\rho)^2
        &=
        \left(\sum_{\eta_\Lambda}\sum_{j \neq \eta_x}\mu(\eta_\Lambda)\left(\sqrt{\frac{\rho(\eta_\Lambda^{x,j})}{\mu(\eta_\Lambda^{x,j})}}-\sqrt{\frac{\rho(\eta_\Lambda)}{\mu(\eta_\Lambda)}}\right)\left(\sqrt{\frac{\rho(\eta_\Lambda^{x,j})}{\mu(\eta_\Lambda^{x,j})}}+\sqrt{\frac{\rho(\eta_\Lambda)}{\mu(\eta_\Lambda)}}\right)\right)^2
        \\\
        &\leq 
        \sum_{\eta_\Lambda}\sum_{j \neq \eta_x}\mu(\eta_\Lambda)\left(\sqrt{\frac{\rho(\eta_\Lambda^{x,j})}{\mu(\eta_\Lambda^{x,j})}}-\sqrt{\frac{\rho(\eta_\Lambda)}{\mu(\eta_\Lambda)}}\right)^2 
        \\\
        &\ + \sum_{\eta_\Lambda}\sum_{j \neq \eta_x}\mu(\eta_\Lambda) \left(\sqrt{\frac{\rho(\eta_\Lambda^{x,j})}{\mu(\eta_\Lambda^{x,j})}}+\sqrt{\frac{\rho(\eta_\Lambda)}{\mu(\eta_\Lambda)}}\right)^2
        \\\
        &= 
        \alpha_{\Lambda}(x,\rho) \sum_{\eta_\Lambda}\sum_{j \neq \eta_x}\mu(\eta_\Lambda) \left(\sqrt{\frac{\rho(\eta_\Lambda^{x,j})}{\mu(\eta_\Lambda^{x,j})}}+\sqrt{\frac{\rho(\eta_\Lambda)}{\mu(\eta_\Lambda)}}\right)^2
    \end{align*}
    So it suffices to bound the second factor on the right-hand side by a constant which does not depend on $\rho, \Lambda$ and $x$. To do this, first note that for all $a,b \geq 0$ one has $(a+b)^2 \leq 2(a^2+b^2)$, hence 
    \begin{align*}
        \sum_{\eta_\Lambda}\sum_{j \neq \eta_x}\mu(\eta_\Lambda) \left(\sqrt{\frac{\rho(\eta_\Lambda^{x,j})}{\mu(\eta_\Lambda^{x,j})}}+\sqrt{\frac{\rho(\eta_\Lambda)}{\mu(\eta_\Lambda)}}\right)^2
        &\leq 
        2\sum_{\eta_\Lambda}\sum_{j\neq \eta_x}\mu(\eta_\Lambda)\left(\frac{\rho(\eta_\Lambda^{x,j})}{\mu(\eta_\Lambda^{x,j})}+\frac{\rho(\eta_\Lambda)}{\mu(\eta_\Lambda)}\right) 
        \\\
        &\leq 
        \sum_{\eta_\Lambda}\sum_{j\neq \eta_x}\left(\frac{\mu(\eta_x)}{\mu(j_x)}\rho(\eta_{\Lambda}^{x,j})+\rho(\eta_\Lambda)\right)
        \\\
        &\leq 2\left(\frac{1}{\delta}+q\right), 
    \end{align*}
    where we used that $\mu$ is a product measure with non-degenerate marginals and $\rho\lvert_{\calF_\Lambda}$ is a probability measure.  
\end{proof}

\subsection{Pointwise monotonicity of the $\alpha$-coefficients}
As a last step, we now prove that considering larger volumes does not decrease the $\alpha$-coefficients. This is the only place in which we really need the product structure of $\mu$.

\begin{lemma}\label{lemma:monotonicity-alpha}
    If $\mu$ is a product measure, then for any $\rho \in \calM_1(\Omega)$ and $x \in \Delta \subset \Lambda \Subset \Z^d$ it holds that 
    \begin{align*}
        0 \leq \alpha_\Delta(x,\rho) \leq \alpha_\Lambda(x, \rho). 
    \end{align*}
\end{lemma}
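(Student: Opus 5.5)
Since $\alpha_\Delta(x,\rho)$ is a sum of squares weighted by $\mu(\eta_\Delta)\geq 0$, non-negativity is immediate. For the monotonicity it suffices, by induction, to treat the case $\Lambda = \Delta \cup \{y\}$ with $y \notin \Delta$. The idea is to recognise $\alpha_\Delta(x,\rho)$ as a convex functional of the density $f_\Lambda := d\rho/d\mu\lvert_{\mathcal{F}_\Lambda}$ evaluated after averaging out the coordinate at $y$. Jensen's inequality then gives the comparison. The product structure of $\mu$ enters exactly here: since $\mu(\eta_\Lambda) = \mu(\eta_\Delta)\mu(\eta_y)$, we have
\begin{align*}
    \frac{\rho(\eta_\Delta)}{\mu(\eta_\Delta)} = \sum_{\eta_y} \mu(\eta_y)\frac{\rho(\eta_\Delta\eta_y)}{\mu(\eta_\Delta\eta_y)},
\end{align*}
and the same holds for $\eta^{x,j}_\Delta$. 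Moreover, the averaging weight $\mu(\eta_y)$ is the same for $\eta_\Delta$ and $\eta_\Delta^{x,j}$, because flipping $x\in\Delta$ does not affect the marginal at $y$. For a general measure $\mu$ this conditional weight would depend on $\eta_x$, and the argument would break.

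The key step is the elementary fact that the map $(a,b) \mapsto (\sqrt{a}-\sqrt{b})^2$ on $[0,\infty)^2$ is jointly convex. Indeed, $(\sqrt a-\sqrt b)^2 = a+b-2\sqrt{ab}$, and $\sqrt{ab}$ is jointly concave, being the geometric mean, which is homogeneous of degree one and concave. Fix $\eta_\Delta$ and $j\neq\eta_x$, and set $a(\eta_y) = \rho(\eta^{x,j}_\Delta\eta_y)/\mu(\eta^{x,j}_\Delta\eta_y)$ and $b(\eta_y) = \rho(\eta_\Delta\eta_y)/\mu(\eta_\Delta\eta_y)$. Jensen's inequality with respect to the probability vector $(\mu(\eta_y))_{\eta_y}$ then yields
\begin{align*}
    \left(\sqrt{\tfrac{\rho(\eta^{x,j}_\Delta)}{\mu(\eta^{x,j}_\Delta)}}-\sqrt{\tfrac{\rho(\eta_\Delta)}{\mu(\eta_\Delta)}}\right)^2
    \leq \sum_{\eta_y}\mu(\eta_y)\left(\sqrt{a(\eta_y)}-\sqrt{b(\eta_y)}\right)^2 .
\end{align*}

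We now multiply this inequality by $\mu(\eta_\Delta)$, use $\mu(\eta_\Delta)\mu(\eta_y) = \mu(\eta_\Delta\eta_y)$, and sum over $\eta_\Delta$ and $j\neq \eta_x$. Since $(\eta_\Lambda)^{x,j} = \eta^{x,j}_\Delta \eta_y$, the right-hand side becomes exactly $\alpha_\Lambda(x,\rho)$, and the claim follows. If some $\mu(\eta_y)$ vanished, the corresponding terms would simply drop out; in the paper's setting $\mu$ has non-degenerate marginals anyway.

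I do not expect a real obstacle. The only points needing care are the justification of joint convexity and the bookkeeping which ensures that the same averaging weights are used for both arguments. This is precisely where the product structure of $\mu$ is essential.
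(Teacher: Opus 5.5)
Your proof is correct and is essentially the paper's argument. The paper also uses the product structure, in the form $\mu(\eta_\Lambda)=\mu(\eta_x)\mu(\eta_{\Lambda\setminus x})$, so that $\rho(\eta_\Delta)$ and $\rho(\eta_\Delta^{x,j})$ are aggregated over the same extensions $\xi_\Lambda$. It then applies subadditivity of the degree-one homogeneous function $\Phi(u,v)=(\sqrt{u/c}-\sqrt{v/d})^2$, which for such functions is equivalent to the joint convexity you feed into Jensen; your one-site induction is harmless but unnecessary.

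One small correction: your aside that terms with $\mu(\eta_y)=0$ ``simply drop out'' is not right. If $\rho$ charges such a cylinder, the averaging identity fails. In that case $\alpha_\Lambda$ is undefined anyway, so the lemma tacitly assumes non-degenerate marginals, as you also note.
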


\begin{proof}
    The product structure of $\mu$ gives us the factorisation
    \begin{align*}
    \mu(\eta_\Lambda) = \mu(\eta_x)\mu(\eta_{\Lambda \setminus x}), 
\end{align*}
so the $\alpha$ coefficients for $x \in \Delta \subset \Lambda \Subset \Z^d$ can be written as 
\begin{align*}
    \alpha_\Delta(x,\rho) &=\sum_{\eta_\Delta}\sum_{j\neq \eta_x} \mu(\eta_x) \left[\sqrt{\frac{\rho(\eta_\Delta^{x,j})}{\mu(j_x)}}-\sqrt{\frac{\rho(\eta_\Delta)}{\mu(\eta_x)}}\right]^2,
    \\\
    \alpha_\Lambda(x,\rho) &= \sum_{\eta_\Delta}\sum_{j\neq \eta_x}\sum_{\xi_\Lambda: \xi_\Delta = \eta_\Delta}
    \mu(\eta_x)\left[\sqrt{\frac{\rho(\xi_\Lambda^{x,j})}{\mu(j_x)}}-\sqrt{\frac{\rho(\xi_\Lambda)}{\mu(\eta_x)}}\right]^2.
\end{align*}
For fixed $\eta_\Delta$ and $j\neq \eta_x$ we can write 
\begin{align*}
    \rho(\eta_\Delta) = \sum_{\xi_\Lambda: \xi_\Delta = \eta_\Delta}\rho(\xi_\Lambda), \quad \rho(\eta_\Delta^{x,j}) = \sum_{\xi_\Lambda: \xi_\Delta = \eta_\Delta}\rho(\xi_\Lambda^{x,j}). 
\end{align*}
So in order to show that 
\begin{align*}
    0 \leq \alpha_\Delta(x,\rho) \leq \alpha_\Lambda(x,\rho),
\end{align*}
it suffices to prove a subadditivity property for functions of the form
\begin{align*}
    \Phi(u,v) = \left(\sqrt{\frac{u}{c}}-\sqrt{\frac{v}{d}}\right)^2, 
\end{align*}
for $c,d>0$. For $u_1, u_2,v_1,v_2 > 0$ we have 
\begin{align*}
    &\Phi(u_1,v_1) + \Phi(u_2,v_2) - \Phi(u_1 + u_2, v_1 + v_2) 
    \\\
    %=
    %&\left(\sqrt{\frac{u_1}{c}}-\sqrt{\frac{v_1}{d}}\right)^2 +  \left(\sqrt{\frac{u_2}{c}}-\sqrt{\frac{v_2}{d}}\right)^2 
    %- \left(\sqrt{\frac{u_1+u_2}{c}}-\sqrt{\frac{v_1+v_2}{d}}\right)^2
    %\\\
    %=
    %&\frac{u_1+u_2}{c} + \frac{v_1+v_2}{d} - 2\sqrt{\frac{u_1 v_1}{cd}} - 2\sqrt{\frac{u_2v_2}{cd}} - \frac{u_1+u_2}{c} - \frac{v_1+v_2}{d} + 2\sqrt{\frac{(u_1+u_2)(v_1+v_2)}{cd}}
    %\\\
    =\ 
    &2\sqrt{\frac{(u_1+u_2)(v_1+v_2)}{cd}}- 2\sqrt{\frac{u_1 v_1}{cd}} - 2\sqrt{\frac{u_2v_2}{cd}}.
\end{align*}
Therefore, to show that $\Phi$ is subadditive, it suffices to show that 
\begin{align*}
    \sqrt{(u_1 + u_2)(v_1 +v_2)} \geq \sqrt{u_1 v_1} + \sqrt{u_2 v_2}. 
\end{align*}
First, observe that 
\begin{align*}
    (v_1 u_2 - v_2 u_1)^2 \geq 0, 
\end{align*}
thus in particular 
\begin{align*}
    (v_1 u_2)^2 + (v_2u_1)^2 \geq 2v_1u_2v_2u_1. 
\end{align*}
This implies that 
\begin{align*}
    (v_1 u_2 + v_2 u_1)^2 = (v_1 u_2)^2 + (v_2u_1)^2 + 2v_1u_2v_2u_1\geq 4v_1u_2v_2u_1. 
\end{align*}
Since both sides are non-negative, we can take square roots and obtain
\begin{align*}
     v_1 u_2 + v_2 u_1 \geq 2 \sqrt{v_1u_2v_2u_1}.
\end{align*}
This in turn implies that 
\begin{align*}
    (v_1 + v_2)(u_1 +u_2) \geq (\sqrt{v_1 u_1}+\sqrt{v_2u_2})^2. 
\end{align*}
But up to taking square roots on both sides, this is precisely what we wanted to show. 
\end{proof}

With all of these rather technical estimates in place, we are finally ready to provide the proof of Proposition \ref{proposition:time-averaged-non-reversible-holley-stroock}. 

\begin{proof}[Proof of Proposition \ref{proposition:time-averaged-non-reversible-holley-stroock}]
    By combining Lemma \ref{lemma:time-averaged-zero-loss-ineq} with the pointwise estimates in Lemma \ref{lemma:upper-bound-beta} we obtain 
    \begin{align}\label{ineq:starting-point}
        \frac{\mathbf{c}}{2}\sum_{x \in \Lambda}\int_0^T \alpha_\Lambda(x,\nu_s)ds 
    \leq 
    \left(\frac{2q}{\delta}\right)^{1/2}
    \sum_{x \in \Lambda}\gamma_\Lambda(x) \int_0^T \sqrt{\alpha_\Lambda(x,\nu_s)}ds. 
    \end{align}
    By Lemma \ref{lemma:monotonicity-alpha} and Lemma \ref{lemma:properties-rho} we have the following pointwise estimates for all $n \in \N$ and $s \in [0,T]$ 
    \begin{align}\label{ineq:pointwise-lower-bound}
        \sum_{x \in \Lambda_n} \alpha_n(x,\nu_s) 
        \geq
        \mathbf{C}_1^{-1} \sum_{x \in \Lambda_n}\alpha_n(x,\nu_s)\sum_{k=1}^n\gamma_k(x) 
        \geq 
        \mathbf{C}_1^{-1} \sum_{k=1}^n \sum_{x \in \Lambda_k}\alpha_k(x,\nu_s)\gamma_k(x). 
    \end{align}
    For $k \in \N$ we now define 
    \begin{align*}
        \delta_k := \sum_{x \in \Lambda_k}\gamma_k(x)\int_0^T \alpha_k(x,\nu_s)ds. 
    \end{align*}
    Note that by definition of $\alpha_k(\cdot,\cdot)$ and $\gamma_k(\cdot)$ we have $\delta_k\geq0$ for all $k \in \N$. By combining \eqref{ineq:starting-point} and \eqref{ineq:pointwise-lower-bound} with the Cauchy--Schwarz inequality for sums we obtain 
    \begin{align*}
        \left[\sum_{k=1}^n\delta_k\right]^2 \leq \frac{2\mathbf{C_1}^2 q}{\delta} \left(\sum_{x \in \Lambda_n}\gamma_n(x) \right)\left(\sum_{x \in \Lambda_n}\gamma_n(x)\left(\int_0^T \sqrt{\alpha_n(x,\nu_s)}ds\right)^2\right)
    \end{align*}
     Another application of the Cauchy--Schwarz inequality to the integrals on the right-hand side yields 
    \begin{align*}
        \left(\int_0^T\sqrt{\alpha_n(x,\nu_s)}ds\right)^2 \leq T \int_0^T \alpha_n(x,\nu_s)ds. 
    \end{align*}
    Combining this with Lemma \ref{lemma:properties-rho} we finally obtain 
    \begin{align*}
        \left[\sum_{k=1}^n \delta_k\right]^2 \leq \mathbf{C} \delta_n n^{d-1} 
    \end{align*}
    for some constant $\mathbf{C}>0$. If there were an index $n_0 \in \N$ such that $\delta_{n_0}>0$, then for all $n>n_0$ it would hold that 
    \begin{align*}
        \frac{1}{n^{d-1}} \leq \mathbf{C}\left[\frac{1}{\sum_{k=1}^{n-1}\delta_k}-\frac{1}{\sum_{k=1}^n\delta_k}\right]. 
    \end{align*}
    The series over the terms on the right-hand side converges because of a standard telescoping argument and monotonicity. However, for $d \in \{1,2\}$, this leads to a contradiction. Therefore, we must have $\delta_n = 0$ for all $n \in \N$ and hence by continuity $\alpha_n(x,\cdot) \equiv 0$ for all $n \in \N$ and $x \in \Lambda_n$. But this implies that $\nu_s = \mu$ for all $s \in [0,T]$ and the claim follows. 
\end{proof}

The final argument of the proof can be condensed into the following elementary analytical lemma that we first state and prove before commenting on what it entails for higher dimensions. 

\begin{lemma}
    For any $C>0$ and $d \in \N$, there is a sequence $(\delta_n)_{n \in \R}$ that satisfies 
    \begin{align}\label{ineq:growth-bound}
        \forall n \in \N: \quad \delta_n \geq C n^{-d+1}\left(\sum_{k=1}^n \delta_k \right)^2
    \end{align}
    which is not identical to zero if and only if $d\geq 3$. 
\end{lemma}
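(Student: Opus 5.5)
Implicitly the sequence is non-negative (as the $\delta_k$ in the proof of Proposition~\ref{proposition:time-averaged-non-reversible-holley-stroock} are). I would set $S_n := \sum_{k=1}^n \delta_k$, so that \eqref{ineq:growth-bound} reads $S_n - S_{n-1} \geq C n^{-d+1} S_n^2$ with $S_0 = 0$. The proof then splits into the two directions of the equivalence.

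\emph{Necessity ($d\in\{1,2\}$ forces $\delta\equiv 0$).} Suppose some $\delta_{n_0}>0$. Then $S_n \geq S_{n_0} > 0$ for all $n \geq n_0$, and $S_n$ is non-decreasing. For $n>n_0$, divide the inequality by $S_nS_{n-1}$ and use $S_n\geq S_{n-1}$:
\begin{align*}
    \frac{1}{S_{n-1}} - \frac{1}{S_n} = \frac{S_n - S_{n-1}}{S_nS_{n-1}} \geq C n^{-d+1}\frac{S_n}{S_{n-1}} \geq C n^{-d+1}.
\end{align*}
Summing from $n_0+1$ to $N$ telescopes the left side to at most $1/S_{n_0}<\infty$. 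The right side is $C\sum_{n=n_0+1}^N n^{-d+1}$, which diverges as $N\to\infty$ when $d\le 2$: it is linear growth for $d=1$ and harmonic growth for $d=2$. This is a contradiction, so $\delta \equiv 0$. This is exactly the argument sketched at the end of the preceding proof, made precise.

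\emph{Sufficiency ($d\geq 3$ admits a non-zero solution).} Here I would construct $S_n$ explicitly and set $\delta_n = S_n - S_{n-1}$. The idea is to reverse the telescoping: prescribe $1/S_n$ to decrease by roughly $n^{-d+1}$, which is summable for $d\geq 3$. Take $1/S_n = A + B\sum_{k> n} k^{-d+1}$ with constants $A,B>0$ to be chosen. Then $S_n$ is increasing and bounded, and $1/S_{n-1}-1/S_n = B n^{-d+1}$. The required inequality $\delta_n \geq C n^{-d+1}S_n^2$ becomes
\begin{align*}
    B n^{-d+1} S_nS_{n-1} \geq C n^{-d+1} S_n^2, \quad\text{i.e.}\quad B\,S_{n-1}\geq C\,S_n.
\end{align*}
Since $S_n/S_{n-1}\le S_\infty/S_0'$, where $S_0'$ is the value at the first index, the ratio is bounded by the finite quantity $(A+B\zeta(d-1))/A$. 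Choosing $B \geq C(A+B\zeta(d-1))/A$, for instance $A$ large relative to $B\zeta(d-1)$ and then $B\ge 2C$, makes the inequality hold for all $n\ge2$.

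The main obstacle is the first index, because $S_0=0$ must be handled. At $n=1$ the condition reads $\delta_1\geq C\delta_1^2$, which holds for any $\delta_1\in(0,1/C]$. I would therefore define $S_1=\delta_1$ small enough, consistent with $1/S_1 = A + B\sum_{k>1}k^{-d+1}$, by taking $A\ge C$. All the $\delta_n$ are then positive, so the sequence is non-zero, which completes the equivalence.
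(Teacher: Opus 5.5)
Your proof is correct. The necessity half is the paper's argument. It also derives $n^{-d+1}\le C^{-1}\bigl(1/S_{n-1}-1/S_n\bigr)$ beyond the first index with $\delta_m>0$, then plays the convergent telescoping sum against the divergent series $\sum_n n^{-d+1}$; you only make the step $S_n/S_{n-1}\ge 1$ explicit. You need not assume non-negativity, since the inequality itself gives $\delta_n\ge Cn^{-d+1}S_n^2\ge 0$.

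For sufficiency you take a different route. The paper simply tries $\delta_n=a\,n^{-(d-1)}$. The condition then collapses to $1/a\ge C\bigl(\sum_{k=1}^n k^{-d+1}\bigr)^2$, so any $a\le 1/(C\zeta(d-1)^2)$ works and $n=1$ needs no separate treatment. You instead reverse the telescoping by prescribing $1/S_n=A+B\sum_{k>n}k^{-d+1}$. This shows the necessity argument is sharp: $1/S_n$ must drop by at least order $n^{-d+1}$ at each step while staying positive, and your construction realises exactly that whenever the series converges. The price is a separate check at $n=1$ (your formula would give $S_0\neq 0$) and a uniform bound on $S_n/S_{n-1}$.

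Your choice of constants is stated in a slightly circular order. Since $S_n/S_{n-1}=1+Bn^{-d+1}S_n\le 1+B/A$, the choice $A=B=2C$ already works for all $n\ge 2$, and it also gives $S_1<1/(2C)\le 1/C$ for $n=1$. Finally, your sequence satisfies $\delta_n=Bn^{-d+1}S_nS_{n-1}\asymp n^{-(d-1)}$ for $n\ge 2$, so it is of the same order as the paper's power law; the direct choice just avoids the bookkeeping.
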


\begin{proof}
    \textit{For $d=1,2$}: Let $(\delta_n)_{n \in \N}$ be a sequence that satisfies \eqref{ineq:growth-bound}. If there is an index $m$ such that $\delta_m>0$, then for all $n > m$ we obtain 
    \begin{align*}
        n^{-d+1} \leq C^{-1}\left(\frac{1}{\sum_{k=1}^{n-1}\delta_k}-\frac{1}{\sum_{k=1}^n \delta_k}\right). 
    \end{align*}
    Now by a telescoping argument the sum over the terms on the right hand side converges to a finite value whereas the sum over the left-hand side diverges. In particular, there can be no such index $m$ where $\delta_m$ is not equal to zero and we obtain that $\delta_n \equiv 0$. 
    \newline 
    \textit{For $d\geq3$}: Since $d\geq 3$ we know that $d-1 \geq 2$, in particular the series with terms $n^{-(d-1)}$ converges. So choosing the candidate series $\hat{\delta}_n = a n^{-(d-1)}$, we see that the growth bound \eqref{ineq:growth-bound} simplifies to 
    \begin{align*}
        \frac{1}{a} \geq C \left(\sum_{k=1}^n k^{-d+1}\right)^2. 
    \end{align*}
    For fixed $C>0$, this is clearly satisfied for sufficiently small values of $a>0$. 
\end{proof}

In particular, the above strategy cannot be extended to dimensions $d\geq3$. However, one may still ask whether we have lost too much information when we applied the Cauchy--Schwarz inequality for the first time, and finer estimates could still make the strategy work in $d\geq3$. But this is also not the case. To see this, let's restrict to the case of nearest-neighbour interactions, i.e., $\gamma_\Lambda(x) \lesssim \mathbf{1}_{\partial \Lambda}(x)$, and consider the sequences $(\alpha_n(x))_{n \in \N}$ that are defined by
\begin{align*}
    \alpha_n(x) = 
    \begin{cases}
        0 \quad &\text{if } x \notin \Lambda_n, \\
        a \cdot k^{-2d+2} &\text{if } x \in (\Lambda_k \setminus \Lambda_{k-1}) \subset \Lambda_n, 
    \end{cases}
\end{align*}
where $a>0$ is some constant which will be determined later. 
Then for fixed $x \in \Z^d$, the sequence $(\alpha_n(x))_{n \in \N}$ satisfies the required monotonicity in $n$ and additionally we have 
\begin{align*}
    \sum_{x \in \Lambda_n} \alpha_n(x) \leq c(d) a \sum_{k=1}^n k^{-d+1} \quad 
    \text{and}
    \quad 
    \sum_{x \in \partial \Lambda_n}\sqrt{\alpha_n(x)} \leq c(d) \sqrt{a}, 
\end{align*}
where $c(d)$ is a dimension dependent constant. For $d\geq 3$, this tells us that by choosing $a$ sufficiently small, this non-vanishing sequence satisfies the growth bound \eqref{ineq:starting-point}.

\subsection*{Acknowledgements}
The author thanks Benedikt Jahnel for helpful discussions during the preparation of this manuscript, Aernout van Enter for pointers to the literature regarding extremal Gibbs measures, and Christof Külske and Christian Maes for comments on the possible quasilocality of stationary measures. 
The author additionally acknowledges the financial support of the Leibniz Association within the Leibniz Junior Research Group on Probabilistic Methods for Dynamic Communication Networks
as part of the Leibniz Competition.
\bibliography{refs}
\bibliographystyle{alpha}

\end{document}